\newtheorem{theorem}{Theorem}[section]
\newtheorem{lemma}[theorem]{Lemma}
\newtheorem{corollary}[theorem]{Corollary}
\newtheorem{proposition}[theorem]{Proposition}
\newtheorem*{example*}{Example}
\newtheoremstyle{myexample}{3pt}{3pt}{\rmfamily}{}{\itshape}{:}{ }{\thmname{#1}\thmnumber{ #2}\thmnote{ (#3)}}
\theoremstyle{myexample}
\newtheoremstyle{myremark}{3pt}{3pt}{\rmfamily}{}{\itshape}{:}{ }{\thmname{#1}}
\theoremstyle{myremark}
\newtheorem{remark}[theorem]{Remark}
\newtheorem*{observation*}{Observation}
\newtheoremstyle{conjecture}{3pt}{3pt}{\itshape}{}{\bfseries}{.}{ }{\thmname{#1}\thmnote{ (#3)}}
\theoremstyle{conjecture}
\newtheorem*{question*}{Question}
\newtheorem{conjecture}{Conjecture}
\newtheorem{theorem*}{Theorem}
\numberwithin{equation}{section}
\newcounter{algorithm}
\renewcommand{\thealgorithm}{\thesection.\arabic{algorithm}}
\newcommand\R{{\mathbf{R}}}
\newcommand\C{{\mathbf{C}}}
\newcommand\Z{{\mathbf{Z}}}
\newcommand\eps{{\varepsilon}}
\begin{document}

\title[Lonely runner conjecture]{Some remarks on the lonely runner conjecture}

\author{Terence Tao}
\address{UCLA Department of Mathematics, Los Angeles, CA 90095-1555.}
\email{tao@math.ucla.edu}

%

\date{}
\subjclass[2010]{11K60}
\keywords{lonely runner conjecture, Bohr sets, generalized arithmetic progressions}

\thanks{The author is supported by NSF grant DMS-1266164, the James and Carol Collins Chair, and by a Simons Investigator Award.  We thank the anonymous referee for many suggestions and corrections, and Anthony Quas, Georges Grekos, and Oriol Serra for further corrections.}

\begin{abstract}
The \emph{lonely runner conjecture} of Wills and Cusick, in its most popular formulation, asserts that if $n$ runners with distinct constant speeds run around a unit circle $\R/\Z$ starting at a common time and place, then each runner will at some time be separated by a distance of at least $\frac{1}{n+1}$ from the others.  In this paper we make some remarks on this conjecture.  Firstly, we can improve the trivial lower bound of $\frac{1}{2n}$ slightly for large $n$, to $\frac{1}{2n} + \frac{c \log n}{n^2 (\log\log n)^2}$ for some absolute constant $c>0$; previous improvements were roughly of the form $\frac{1}{2n} + \frac{c}{n^2}$.  Secondly, we show that to verify the conjecture, it suffices to do so under the assumption that the speeds are integers of size $n^{O(n^2)}$.  We also obtain some results in the case when all the velocities are integers of size $O(n)$.
\end{abstract}

\maketitle

\section{Introduction}

The \emph{lonely runner conjecture} of Wills \cite{wills} and Cusick \cite{cusick} (as formulated in \cite{bggst}) asserts that if $n \geq 2$ is an integer and $n$ runners run around the unit circle $\R/\Z$ with constant distinct speeds starting from a common time and place, then each runner is ``lonely'' in the sense that there exists a time in which the runner is separated by a distance at least $\frac{1}{n}$ from the others.  The conjecture originated from questions in view obstruction \cite{cusick} and diophantine approximation \cite{wills}, but also has connections to chromatic numbers of distance graphs \cite{zhu} and to flows in regular matroids \cite{bggst}.  The conjecture is known for $n \leq 7$ (see \cite{bs} and the references therein), under various ``lacunarity'' hypotheses on the velocities (see \cite{pandey}, \cite{rtv}, \cite{bs2}, \cite{dub}), or if one is allowed to ignore one runner of one's choosing at any given time \cite{cg}.  We refer the reader to the recent paper \cite{ps} for further discussion of the literature on this conjecture and additional references.

It is known (see e.g. \cite[\S 4]{bhk}) that one can assume without loss of generality that the speeds of the runners are integers, which allows one to place the time variable $t$ in the unit circle $\R/\Z$ rather than on the real line; one can also normalise the speed of the ``lonely'' runner to be zero.  This allows us to reformulate the conjecture (after decrementing $n$ by one to account for the normalised speed of the lonely runner) as follows.  Given an element $t$ of the unit circle $\R/\Z$, let $\|t\|_{\R/\Z}$ denote the distance of (any representative of) $t$ to the nearest integer.  Given an $n$-tuple of non-zero integers $v_1,\dots,v_n$, let $\delta(v_1,\dots,v_n)$ denote the maximal value of $\min(\|tv_1\|_{\R/\Z}, \dots, \|tv_n\|_{\R/\Z})$ as $t$ ranges in $\R/\Z$; note that this minimum is attained because $\R/\Z$ is compact.  We then let $\delta_n$ denote the infimal value of $\delta(v_1,\dots,v_n)$ as $(v_1,\dots,v_n)$ ranges over $n$-tuples of distinct non-zero integers; this quantity was termed the \emph{gap of loneliness} in \cite{ps}.  The Dirichlet approximation theorem implies that $\delta(1,\dots,n) \leq \frac{1}{n+1}$, and hence 
\begin{equation}\label{deltan}
\delta_n \leq \frac{1}{n+1}.
\end{equation}
See also \cite{gw} for further sets of $n$-tuples $(v_1,\dots,v_n)$ that witness this bound.
The lonely runner conjecture is then equivalent to the assertion that this bound is sharp:

\begin{conjecture}[Lonely runner conjecture]\label{lrc}  For every $n \geq 1$, one has $\delta_n = \frac{1}{n+1}$.
\end{conjecture}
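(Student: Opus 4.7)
The conjecture is famously open, so any proposal must be aspirational; I will outline the most natural approach, following the broad strategy of this paper. The plan is to attack the conjecture as a \emph{finite} problem for each $n$, using the reduction announced in the abstract (and proved later in the paper) to integer velocities of size at most $n^{O(n^2)}$. Combined with the classical reduction \cite{bhk} to distinct positive integer speeds with the lonely runner normalised to have speed zero, it would suffice to show that for every such $n$-tuple $(v_1,\dots,v_n)$ the ``bad set''
\[ B = \bigcup_{j=1}^n B_j, \qquad B_j = \bigl\{ t \in \R/\Z : \|t v_j\|_{\R/\Z} < \tfrac{1}{n+1} \bigr\}, \]
is a proper subset of $\R/\Z$; indeed, by compactness the complement would then contain a point realising $\min_j \|tv_j\|_{\R/\Z} \geq \tfrac{1}{n+1}$, matching \eqref{deltan}.

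Since each $B_j$ has Haar measure exactly $2/(n+1)$ (the map $t \mapsto tv_j$ being measure-preserving on $\R/\Z$ for $v_j \neq 0$), the union bound only yields $|B| \leq 2n/(n+1)$, which is useless. The natural refinement is a Fourier-analytic second moment. Choosing a smooth nonnegative majorant $f$ of $\mathbf{1}_{(-1/(n+1),1/(n+1))}$ on $\R/\Z$, one would study $S(t) = \sum_{j=1}^n f(tv_j)$ and try to exhibit a $t_0$ with $S(t_0) < f(0)$. Expanding in Fourier series one gets $\int S = n\hat f(0)$ and $\int S^2 = n \hat f(0)^2 + \sum_{i\neq j} \sum_{k\neq 0} |\hat f(k)|^2 \mathbf{1}_{k v_i = -k v_j}$, so a suitable Cauchy--Schwarz estimate succeeds provided the frequencies $\{v_j\}$ exhibit few additive coincidences $a v_i + b v_j = 0$ with small $a,b$; equivalently, provided the velocity set is ``additively unstructured'' in a sense reminiscent of the Bohr-set language listed among the paper's keywords.

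The main obstacle — and the reason the conjecture remains open — is precisely the abundance of \emph{additively structured} extremisers. The canonical case $(v_1,\dots,v_n) = (1,2,\dots,n)$ makes \eqref{deltan} sharp, and (as noted in \cite{gw}) many further structured tuples saturate the bound; for all of these, the bad sets overlap in such a correlated fashion that any first- or second-moment estimate is tight only up to constants and cannot close the gap between $2n/(n+1)$ and $1$. A successful proof would therefore have to dichotomise: in an \emph{unstructured regime}, where $\{v_1,\dots,v_n\}$ is not trapped inside a low-dimensional generalised arithmetic progression or Bohr set of small doubling, the Fourier second moment above would win; in a \emph{structured regime}, one would identify the near-extremisers by classifying those velocity sets for which the Fourier energy concentrates at few frequencies, and argue by a direct, Dirichlet-approximation style analysis on the induced low-dimensional torus. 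Quantifying ``structured'' sharply enough that the two regimes truly cover every $n$-tuple of integers in $[1, n^{O(n^2)}]$, uniformly in $n$, is the hard part: any loss of a constant factor in the additive-combinatorial dichotomy reproduces exactly the $\tfrac{1}{2n}$-versus-$\tfrac{1}{n+1}$ gap that the present paper only manages to narrow by a $\tfrac{\log n}{n(\log\log n)^2}$ factor.
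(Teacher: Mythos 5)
This statement is a \emph{conjecture}, not a theorem, and the paper does not prove it; it only establishes the partial results Theorem~\ref{main1}, Theorem~\ref{main2}, and Propositions~\ref{short}, \ref{short2}. Your submission is explicitly a strategy outline rather than a proof, and no such outline can be graded as ``correct'' for an open problem, so the honest assessment is simply that there is a gap: the argument is not carried out, and no one currently knows how to carry it out. That said, the two halves of your sketch do line up with the two main theorems of the paper --- the ``finite problem'' reduction is Theorem~\ref{main2}, and the second/higher-moment refinement of the union bound in the structured--unstructured dichotomy is the engine behind Theorem~\ref{main1} (which uses the third moment and a rank-three Bohr set estimate, not just the second moment) --- so you have identified the right toolkit even though neither half closes the gap.

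Two concrete technical problems in the sketch as written. First, your second-moment expansion is wrong: for $i\neq j$ one has
\[
\int_{\R/\Z} f(tv_i)\,f(tv_j)\,dt = \sum_{\substack{k,\ell\in\Z\\ kv_i+\ell v_j=0}} \hat f(k)\,\hat f(\ell),
\]
so the off-diagonal weight is supported on the lattice of \emph{pairs} $(k,\ell)$ with $kv_i+\ell v_j=0$, not on an indicator $1_{kv_i=-kv_j}$ (which is identically zero for distinct positive $v_i,v_j$). This matters, because it is precisely the size of this solution set --- the multiplicity of the dual rank-two progression, as in Lemma~\ref{lod} --- that the paper controls, and what you wrote would trivialise it. Second, the reduction to velocities of size $n^{O(n^2)}$ makes the conjecture decidable for each \emph{fixed} $n$ but does not by itself yield a proof for all $n$: the number of tuples to check grows with $n$, so one still needs a uniform argument, which is exactly where your ``hard part'' paragraph (correctly) concedes the approach has no known completion.
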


Because we have decremented $n$ by one, Conjecture \ref{lrc} is currently only known for $n \leq 6$ \cite{bs}.

For any \emph{frequency} $v \in \Z$ and radius $\delta>0$, we define the \emph{rank one Bohr set}
\begin{equation}\label{bohr-def}
 B(v; \delta) \coloneqq \{ t \in \R/\Z: \|tv\|_{\R/\Z} \leq \delta \};
\end{equation}
more generally, we define the higher rank Bohr sets $B(v_1,\dots,v_r; \delta_1,\dots,\delta_r)$ for $\delta_1,\dots,\delta_r > 0$ and $v_1,\dots,v_r \in \Z$ and some \emph{rank} $r \geq 1$ by the formula
\begin{equation}\label{bohr2-def}
B(v_1,\dots,v_r; \delta_1,\dots,\delta_r) \coloneqq B(v_1;\delta_1) \cap \dots \cap B(v_r;\delta_r).
\end{equation}
We can then interpret $\delta_n$ in terms of Bohr sets in a number of equivalent ways:
\begin{itemize}
\item[(i)]  $\delta_n$ is the largest number for which one has the strict inclusion
$$ \bigcup_{i=1}^n B(v_i; \delta) \subsetneq \R/\Z$$
(or equivalently, $\min(\|tv_1\|_{\R/\Z}, \dots, \|tv_n\|_{\R/\Z}) > \delta$ for some time $t$) for every non-zero integers $v_1,\dots,v_n$ and $0 < \delta < \delta_n$.
\item[(ii)] Taking contrapositives, $\delta_n$ is the least number for which one there exists a covering of the form
\begin{equation}\label{rcover}
 \R/\Z = \bigcup_{i=1}^n B(v_i; \delta_n)
\end{equation}
of the unit circle by $n$ \emph{rank one Bohr sets} $B(v_i; \delta_n)$, $i=1,\dots,n$, for some non-zero integers $v_1,\dots,v_n$.
\end{itemize}

We have a simple and well known lower bound on $\delta_n$ that gets within a factor of two of the lonely runner conjecture:

\begin{proposition}\label{pdn}  For every $n \geq 1$, one has $\delta_n \geq \frac{1}{2n}$.
\end{proposition}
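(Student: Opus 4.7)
The plan is to proceed by a direct Lebesgue measure computation on the unit circle, exploiting interpretation (ii) of $\delta_n$ in terms of coverings by rank one Bohr sets. The key observation is that for any non-zero integer $v$ and any $\delta > 0$, the rank one Bohr set $B(v;\delta) = \{t \in \R/\Z : \|tv\|_{\R/\Z} \leq \delta\}$ has Lebesgue measure exactly $\min(2\delta, 1)$: as $t$ traverses $\R/\Z$, the quantity $tv \bmod 1$ winds $|v|$ times around the circle, so $B(v;\delta)$ consists of $|v|$ arcs each of length $2\delta/|v|$ (assuming $\delta \leq 1/2$, which is the relevant regime).

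Given this, I would fix an arbitrary $n$-tuple of distinct non-zero integers $v_1,\dots,v_n$ and an arbitrary $\delta < \frac{1}{2n}$, and apply the trivial union bound
\[
\mu\Bigl( \bigcup_{i=1}^n B(v_i; \delta) \Bigr) \;\leq\; \sum_{i=1}^n \mu(B(v_i;\delta)) \;\leq\; 2n\delta \;<\; 1,
\]
where $\mu$ denotes normalised Haar/Lebesgue measure on $\R/\Z$. Consequently $\bigcup_{i=1}^n B(v_i;\delta)$ is a proper subset of $\R/\Z$, and in fact its complement has positive measure, so there exists $t \in \R/\Z$ with $\|tv_i\|_{\R/\Z} > \delta$ for every $i$. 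This witnesses $\delta(v_1,\dots,v_n) > \delta$.

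Since the $v_i$ were arbitrary, we obtain $\delta_n \geq \delta$ for every $\delta < \frac{1}{2n}$, and therefore $\delta_n \geq \frac{1}{2n}$, as desired. There is no real obstacle in this argument — the only mild subtlety is verifying the measure computation for $B(v;\delta)$, which reduces to the fact that the map $t \mapsto tv$ on $\R/\Z$ is measure-preserving when pushed forward (equivalently, is $|v|$-to-one with Jacobian $|v|$), so the preimage of the $2\delta$-neighbourhood of $0$ has the same total measure $2\delta$.
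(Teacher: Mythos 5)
Your proof is correct and is essentially the same argument as the paper's: both compute $m(B(v;\delta)) = 2\delta$ via the measure-preserving property of $t\mapsto tv$ and then apply the union bound, the only cosmetic difference being that you run the argument for each $\delta < \frac{1}{2n}$ while the paper applies the union bound directly to a covering at the critical radius $\delta_n$.
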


\begin{proof}  If $t$ is drawn uniformly at random from $\R/\Z$, then for any nonzero integer $v_i$, $tv_i$ is also distributed uniformly at random on $\R/\Z$.  Letting $m$ denote Lebesgue measure on $\R/\Z$, we thus have
\begin{equation}\label{mvd}
 m( B(v_i;\delta_n) ) = 2 \delta_n 
\end{equation}
for every $i=1,\dots,n$ (noting from \eqref{deltan} that $\delta_n \leq 1/2$).  Using the union bound
\begin{equation}\label{union}
m\left( \bigcup_{i=1}^n B(v_i;\delta_n) \right) \leq \sum_{i=1}^n m( B(v_i;\delta_n) )
\end{equation}
and \eqref{rcover}, we conclude that
$$ 1 \leq \sum_{i=1}^n 2 \delta_n$$
and the claim follows.
\end{proof}

The union bound \eqref{union} is very crude, and one would naively expect to be able to improve significantly upon Proposition \ref{pdn} by using more sophisticated bounds, for instance by using some variant of the inclusion-exclusion formula combined with bounds on the size of higher-rank Bohr sets $B(v_{i_1},\dots,v_{i_r};\delta_n,\dots,\delta_n)$.  However, only slight improvements to this bound are known.  Chen \cite{chen} obtained the bound
\begin{equation}\label{dd}
 \delta_n \geq \frac{1}{2n - 1 + \frac{1}{2n-3}}
\end{equation}
and Chen and Cusick \cite{cc} obtained the improvement
$$ \delta_n \geq \frac{1}{2n-3}$$
assuming that $2n-3$ was prime.  In the recent paper \cite{ps}, a bound of the form
\begin{equation}\label{ee}
 \delta_n \geq \frac{1}{2n - 2 + o(1)}
\end{equation}
as well as the variant bound
\begin{equation}\label{variant}
\delta(v_1,\dots,v_n) \geq \frac{1}{2(n-\sum_{i=2}^n \frac{1}{v_i})}
\end{equation}
was obtained as $n \to \infty$, without any primality restrictions.  These improvements relied primarily on estimates on rank two Bohr sets $B(v_i,v_j;\delta_n,\delta_n)$.

These bounds only improve on the bound in Proposition \ref{pdn} by a multiplicative factor of $1 + O(\frac{1}{n})$.  The following example can help explain why this factor is so close to $1$.  Let $n$ be a large integer, and let $p_1,\dots,p_s$ denote the primes between $n/4$ and $n/2$, thus by the prime number theorem $s = (1+o(1)) \frac{n}{4 \log n}$ as $n \to \infty$.  For each $i=1,\dots,s$, the rank one Bohr set $B(p_i; \delta_n)$ consists of $p_i$ intervals of the form $[\frac{a}{p_i}-\frac{\delta_n}{p_i}, \frac{a}{p_i} + \frac{\delta_n}{p_i}]$ for $a=0,\dots,p-1$ (where we identify these intervals with subsets of $\R/\Z$ in the usual fashion).  This makes this collection of Bohr sets behave like a ``sunflower'' (in the sense of \cite{er}) with a very small ``kernel''.  To see this, we separate the $a=0$ interval of $B(p_i; \delta_n)$ from the others, writing
$$ B(p_i;\delta_n) = \left[-\frac{\delta_n}{p_i}, \frac{\delta_n}{p_i}\right] \cup B'(p_i;\delta_n)$$
where we think of the interval $\left[-\frac{\delta_n}{p_i}, \frac{\delta_n}{p_i}\right]$ as the ``kernel'' of $B(p_i, \delta_n)$, and where $B'(p_i,\delta_n)$ is the ``petal'' set
$$ B'(p_i;\delta_n) \coloneqq \bigcup_{a=1}^{p_i-1} \left[\frac{a}{p_i}-\frac{\delta_n}{p_i}, \frac{a}{p_i} + \frac{\delta_n}{p_i}\right].$$
Clearly, the interval $[-\frac{\delta_n}{p_i}, \frac{\delta_n}{p_i}]$ has measure $\frac{2\delta_n}{p_i}$, and so by \eqref{mvd} the remaining portion $B'(p_i;\delta_n)$ of the rank one Bohr set has measure $(1 - \frac{1}{p_i}) 2\delta_n$.  

Now we claim that the ``petal'' sets $B'(p_i;\delta_n)$ for $i=1,\dots,r$ are disjoint, for reasons relating to the spacing properties of the Farey sequence.  Indeed, suppose for contradiction that there was a point $t \in \R/\Z$ that was in both $B'(p_i;\delta_n)$ and $B'(p_j;\delta_n)$ for some $1 \leq i < j \leq n$.  Then we have
$$ \left\| t - \frac{a}{p_i}\right \|_{\R/\Z} \leq \frac{\delta_n}{p_i}, \quad \left\| t - \frac{b}{p_j}\right \|_{\R/\Z} \leq \frac{\delta_n}{p_j} $$
for some $1 \leq a \leq p_i$ and $1 \leq b \leq p_j$.  In particular by the triangle inequality we have
$$ \left\| \frac{a}{p_i} - \frac{b}{p_j} \right\|_{\R/\Z} \leq \frac{\delta_n}{p_i} + \frac{\delta_n}{p_j}.$$
On the other hand, as $p_i,p_j$ are distinct primes, and $a,b$ are not divisible by $p_i,p_j$ respectively, the fraction $\frac{a}{p_i} - \frac{b}{p_j}$ is not an integer, and hence
$$ \left\| \frac{a}{p_i} - \frac{b}{p_j} \right\|_{\R/\Z} \geq \frac{1}{p_i p_j}.$$
Comparing the two inequalities and multiplying by $p_i p_j$, we obtain
$$ \delta_n (p_i + p_j) \geq 1,$$
but this contradicts \eqref{deltan} and the hypothesis $p_i, p_j \leq n/2$.

From this disjointness, we see that the union bound is obeyed with equality for the $B'(p_i,\delta_n)$, and hence
\begin{align*}
m\left( \bigcup_{i=1}^s B(p_i;\delta_n) \right) &\geq m\left( \bigcup_{i=1}^s B'(p_i;\delta_n)\right) \\
&= \sum_{i=1}^s m(B'(p_i;\delta_n)) \\
&= \sum_{i=1}^s \left(1-\frac{1}{p_i}\right) 2 \delta_n\\
&\geq \left(1-\frac{4}{n}\right) \sum_{i=1}^s m( B(p_i;\delta_n) ).
\end{align*}
In particular, we see that the union bound 
$$ m\left( \bigcup_{i=1}^s B(p_i;\delta_n) \right) \leq \sum_{i=1}^s m( B(p_i;\delta_n) )$$
is only off from the truth by a multiplicative factor of $1 + O(\frac{1}{n})$, which is consistent with the improvements to Proposition \ref{pdn} in the known literature.

On the other hand, the above example only involves $s$ rank one Bohr sets rather than $n$ rank one Bohr sets.  As $s$ is comparable to $n/\log n$ rather than $n$, this suggests that perhaps some ``logarithmic'' improvement to the known lower bounds on $\delta_n$ is still possible via some refinement of the union bound.  The first main result of this paper shows that this is (almost) indeed the case:

\begin{theorem}\label{main1}  There exists an absolute constant $c>0$ such that
$$ \delta_n \geq \frac{1}{2n} + \frac{c \log n}{n^2 (\log\log n)^2}$$
for all sufficiently large $n$.
\end{theorem}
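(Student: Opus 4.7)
The plan is to refine the union bound \eqref{union} by producing a quantitative lower bound on the total overlap
\[
\int_{\R/\Z}(f(t)-1)\,dt = 2n\delta - 1,
\]
where $f(t) := \sum_{i=1}^n \mathbf{1}_{B(v_i;\delta)}(t)$ is the multiplicity function of a putative covering $\R/\Z = \bigcup_{i=1}^n B(v_i;\delta)$ attaining $\delta = \delta_n$. Any estimate $\int(f-1)\,dt \geq \eta$ immediately gives $\delta \geq (1+\eta)/(2n)$, so it suffices to produce excess overlap of magnitude $\eta = \Omega(\log n/(n(\log\log n)^2))$.

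The sunflower discussion preceding the theorem identifies where this overlap should come from: common divisibility among the velocities. If a prime $p$ divides $s := \#\{i : p\mid v_i\}$ many of the velocities, and if $V := \max\{|v_i| : p\mid v_i\}$, then for each $k\in\{0,1,\dots,p-1\}$ the rational $k/p$ lies in every such $B(v_i;\delta)$, together with a common interval of length $2\delta/V$ centred at $k/p$. These $p$ translates form a set of measure at least $2p\delta/V$ on which $f \geq s$, and therefore contribute at least $(s-1)\cdot 2p\delta/V$ to $\int(f-1)$. The task is thus reduced to producing a prime $p$ and a sub-family of velocities of bounded size $V$ for which $(s-1)p/V \gtrsim \log n/(\log\log n)^2$.

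To extract such a $p$, I would proceed via two pigeonhole stages. First, a dyadic decomposition of the $|v_i|$, combined with the paper's reduction to velocities of size $n^{O(n^2)}$ (the second main result alluded to in the abstract), isolates a dyadic scale $V_0 \leq n^{O(1)}$ on which $\Omega(n/\log n)$ of the velocities lie; this controls the ratio $p/V$. Second, on this sub-family a Hardy--Ramanujan bound $\sum_i \omega(v_i) = \Omega(n \log\log V_0)$ (with $\omega(v)$ the number of distinct prime factors of $v$), together with Mertens-type estimates for $\sum_p 1/p$, permits pigeonholing over primes in a carefully chosen range $[p_0,2p_0]$ and produces a prime $p$ satisfying $(s-1)\cdot p/V_0 \gtrsim \log n/(\log\log n)^2$. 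The two $\log\log n$ losses arise respectively from the Hardy--Ramanujan count of prime factors and from the balancing of $s$ (favouring small $p$) against the halo width $p/V_0$ (favouring large $p$).

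The main obstacle is handling the adversarial ``sunflower'' regime, in which the velocities are essentially pairwise coprime and squarefree, so no prime divides many of them and the pigeonhole argument above is vacuous. In this regime, however, the petal-disjointness observation from the introduction can itself be inverted: such velocity configurations produce Bohr sets whose ``petals'' are nearly disjoint, so $\bigcup_i B(v_i;\delta)$ has measure at most $(1-\Omega(1/V_0))\cdot 2n\delta$, forcing $\delta$ to exceed $1/(2n)$ by a compensating amount. A careful case split between the arithmetic-pigeonhole regime and the sunflower regime, optimised over the choice of dyadic scale $V_0$ and prime range $p_0$, is where the precise $(\log\log n)^2$ denominator emerges, and this optimisation is the principal technical challenge of the argument.
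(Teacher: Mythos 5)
Your starting point (bound the excess $\int_{\R/\Z}(F-1)\,dt = 2n\delta_n - 1$ from below by producing overlaps tied to shared prime factors) is the same as the paper's, and your identification of the sunflower regime as the principal obstacle is correct. However, both of the mechanisms you propose to push past the trivial bound have genuine gaps, and the second one is fatal.

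First, the dyadic pigeonhole stage does not run. The velocities $v_1,\dots,v_n$ in a near-extremal configuration are a priori unbounded integers; there is no size restriction on them to exploit. Theorem~\ref{main2} does not help here: that result says it suffices to \emph{verify} the conjecture at velocities of size $n^{O(n^2)}$, but it does not allow you to assume a tuple witnessing $\delta(v_1,\dots,v_n)$ close to $\delta_n$ has velocities of that size, and even if it did, dyadic pigeonholing over $\sim n^2\log n$ scales yields $\ll 1/(n\log n)$ of the velocities per scale, which is useless. The paper instead \emph{derives} the arithmetic structure from the contradiction hypothesis. The key step is a third-moment argument: from $\int (F-1) = 2A/n$ being small and $\int (F-1)^2 \gg 1$ (which follows from lower bounds on rank-two Bohr sets, Lemma~\ref{ark}), H\"older's inequality forces $\int F^3 \gg (n/A)\int F^2$; translated via Lemma~\ref{lod} into multiplicity bounds on rank-three progressions, this produces, after Propositions~\ref{new} and~\ref{will}, many $k$ with $v_k = n'_{0,k}\, v_0/a'_k$ for a fixed $v_0$, small denominators $a'_k = o(\log n)$, and crucially $|n'_{0,k}| = O(A_1 n)$. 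That is how the paper earns a bounded scale, and it cannot be obtained by pigeonholing alone.

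Second, and more seriously, your fallback for the sunflower regime does not give the advertised gain. You correctly observe that petal disjointness caps $m(\bigcup B(v_i;\delta))$ at $(1-\Omega(1/V_0))\cdot 2n\delta$, but with $V_0 \asymp n$ this yields only $\delta_n \geq \frac{1}{2n}(1 + O(1/n))$, which is the previously known bound~\eqref{ee}, not $\log n/(\log\log n)^2$. There is no case split between a ``pigeonhole regime'' and a ``sunflower regime'' to optimize: the sunflower configuration is essentially the generic near-extremal configuration, and the logarithmic gain \emph{must} come out of it directly. The paper's mechanism for doing so uses the reduction to the coefficients $n'_{0,k} = O(A_1 n)$, which are distinct integers in a range of length $O(An)$, together with a sieve bound (not Hardy--Ramanujan, which gives the wrong direction of control) showing at most $O\bigl(\frac{\log\log n}{\log n} A_1 n\bigr)$ of them lack a ``medium-sized'' prime factor in $[\log^{10}n, n^{1/10}]$; the remaining $\gg \frac{A_1 n}{A\log\log n}$ of them pair up via shared medium primes, each such pair contributing a ``major arc'' overlap of measure $\gg (\log^5 n)/n^2$, and the overlaps for distinct medium primes are disjoint. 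Summing contradicts $A = o(\log n/(\log\log n)^2)$. A Hardy--Ramanujan average $\sum_i\omega(v_i) \gg n\log\log n$ does not substitute for this: that average can be carried entirely by a vanishing fraction of highly composite $v_i$ and is consistent with most of the $v_i$ being prime (exactly the sunflower case). You need the small-sieve statement that \emph{few} integers of size $O(n)$ avoid all medium primes, which is the opposite tail.
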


It is likely that with a refinement of the arguments below, one could eliminate at least one of the $\log\log n$ factors in the denominator; however the example discussed above suggests to the author that significantly more effort would be needed in order to improve the $\log n$ factor in the numerator by these methods.  The constant $c$ is in principle computable explicitly, but we have not attempted to arrange the arguments to optimise this constant.

We prove Theorem \ref{main1} in Section \ref{first-sec}.  In addition to the control on rank two Bohr sets $B(v_i,v_j; \delta_n,\delta_n)$ that was exploited in previous literature, we also now use estimates on the size of rank three Bohr sets $B(v_i,v_j,v_k;\delta_n,\delta_n,\delta_n)$.  The key point is that if the union bound \eqref{union} were to be close to sharp with $\delta_n$ very close to $1/2n$, then one can use H\"older's inequality (or the Cauchy-Schwarz inequality), together with lower bounds on the size of rank two Bohr sets to show that many rank three Bohr sets must be extremely large.  After using some Fourier analysis to compute the size of these rank three Bohr sets, together with some elementary additive combinatorics involving generalised arithmetic progressions, one eventually concludes that a large fraction of the velocities $v_i$ must be\footnote{This can be compared with the results in \cite{czer}, \cite{alon}, which study the opposite case where the velocities are assumed to be random rather than highly structured, in which case the gap $\delta(v_1,\dots,v_n)$ is in fact very close to $1/2$.} essentially contained (ignoring some ``small denominators'') in an arithmetic progression of length comparable to $n$ and symmetric around the origin.  As the preceding example indicates, this by itself is not inconsistent with the union bound being close to tight, if the velocities $v_i$ behave like (rescaled versions of) prime numbers $p_i$.  But the primes are a logarithmically sparse set, and standard sieve theory bounds tell us that most numbers of size comparable to $n$ will not only be composite, but in fact contain a medium-sized prime factor (e.g. a factor between $\log^{10} n$ and $n^{1/10}$).  One can use these medium-sized prime factors to show that many of the rank one Bohr sets will intersect other rank one Bohr sets in various disjoint (and reasonably large) ``major arcs'', which can then be used to improve upon the union bound.  See also \eqref{variant} for some comparable improvements on the union bound in the case when the velocities are contained in a progression of length comparable to $n$.

Our second result is of a different nature, and is concerned with the decidability of the lonely runner conjecture for bounded values of $n$.  In its current formulation, it is not obvious that one can decide Conjecture \ref{lrc} in finite time for any fixed $n$, since one potentially has to compute $\delta(v_1,\dots,v_n)$ for an infinite number of tuples $(v_1,\dots,v_n)$.  However, the following result\footnote{This result first appeared on the author's blog at {\tt terrytao.wordpress.com/2015/05/13}.} shows that one only needs to verify the conjecture for a finite (albeit large) number of tuples for each $n$:

\begin{theorem}\label{main2}  There exists an absolute (and explicitly\footnote{This theorem is trivially true if one allows the constant $C_0$ to be ineffective.  Indeed, one could set $C_0$ to be arbitrary if the lonely runner conjecture was true up to $n_0$, and to be sufficiently large (depending on the first counterexample to this conjecture) otherwise.  We thank Kevin O'Bryant for this remark.} computable) constant $C_0>0$, such that the following assertions are logically equivalent for every natural number $n_0 \geq 1$:
\begin{itemize}
\item[(i)]  One has $\delta_n = \frac{1}{n+1}$ for all $n \leq n_0$ (that is, Conjecture \ref{lrc} holds for $n$ up to $n_0$).
\item[(ii)]  One has $\delta(v_1,\dots,v_n) \geq \frac{1}{n+1}$ for all $n \leq n_0$ and every tuple $(v_1,\dots,v_n)$ of non-zero distinct integers with $|v_i| \leq n^{C_0 n^2}$ for all $i=1,\dots,n$.
\end{itemize}
\end{theorem}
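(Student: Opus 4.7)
The direction (i)$\Rightarrow$(ii) is immediate from the definition of $\delta_n$, so the content lies in (ii)$\Rightarrow$(i), which I would prove by contrapositive. Suppose that for some $n \leq n_0$ we have a counterexample tuple $(v_1,\dots,v_n)$ of distinct nonzero integers with $\delta(v_1,\dots,v_n) < \frac{1}{n+1}$; the task is to produce another counterexample with all $|v_i'| \leq n^{C_0 n^2}$. Since $\delta$ is invariant under rescaling by a common divisor, under signed permutations, and under sign flips of individual coordinates, I would normalize to assume $\gcd(v_1,\dots,v_n)=1$ and $0 < v_1 < \dots < v_n$.

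The function $f(t) \coloneqq \min_i \|tv_i\|_{\R/\Z}$ is continuous and piecewise linear with slopes $\pm v_i$, so its maximum $\delta^* = \delta(v_1,\dots,v_n) < \frac{1}{n+1}$ is achieved at a ``corner'' $t^*\in\R/\Z$ where at least two of the $\|tv_i\|_{\R/\Z}$ coincide. In particular $t^* = m/q$ for some integer $m$ and some $q \in \{|v_{i_1}\pm v_{i_2}| : 1\le i_1<i_2\le n\}$, and $\delta^* = d/q$ with an integer $d$ satisfying $d < q/(n+1)$. The requirement that $t^*$ is a true maximum forces the arithmetic constraints $m v_i \bmod q \in [d, q-d]$ for every $i$. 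My plan is to encode the full counterexample property as a finite boolean combination of linear inequalities in the $v_i$, and then invoke an effective Diophantine bound on integer solutions.

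To that end I would parameterize by a \emph{combinatorial type} $\tau$ recording: (a) the choice of tied pair $(i_1,i_2)$ and sign at the maximum; (b) for each remaining index $i$, which side of the prohibited interval $(-d,d) \pmod{q}$ the residue $m v_i \bmod q$ lies on; and (c) the cyclic order of the arc centers $a/v_i$ on $\R/\Z$ near each potential local maximum of $f$. For each fixed $\tau$, the requirement that $\vec v$ realizes $\tau$ and that $f$ does not exceed $\delta^*$ anywhere on $\R/\Z$ reduces, by the piecewise linear structure of $f$, to a system of $O(n^2)$ linear inequalities with integer coefficients of magnitude $n^{O(n)}$ in the auxiliary variables $(m,d,q,v_1,\dots,v_n)$. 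A standard effective result on the vertices of integer polyhedra (Siegel's lemma, or the Borosh--Treybig theorem on integer solutions of linear Diophantine systems) then guarantees that if this system has an integer solution at all, it has one with $|v_i'| \leq n^{C_0 n^2}$, producing the desired small counterexample.

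The main obstacle is the combinatorial reformulation step: one must simultaneously cope with the fact that the ``critical denominator'' $q$ is itself a function of $\vec v$, and that the global maximum condition ``$f(t) \le \delta^* < 1/(n+1)$ for all $t \in \R/\Z$'' is in principle a continuum of constraints. The piecewise linear structure of $f$ reduces these to finitely many inequalities per combinatorial type, but care is needed to verify that the resulting polyhedra have coefficient sizes and constraint counts that yield the claimed $n^{C_0 n^2}$ bound after invoking the effective Diophantine result. The factor $n^{O(n^2)}$ (rather than $n^{O(n)}$) arises naturally from applying such a bound in dimension $n$ to a constraint matrix whose entries themselves can be of size $n^{O(n)}$, reflecting the freedom in choosing the denominator $q$ of the critical point.
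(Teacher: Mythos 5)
Your direction (i)$\Rightarrow$(ii) is indeed immediate, but the reduction you propose for (ii)$\Rightarrow$(i) has a genuine gap, and it is not the approach the paper takes.

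The gap is in the step that converts ``$f(t)\le\delta^\ast$ for all $t\in\R/\Z$'' into ``a system of $O(n^2)$ linear inequalities with coefficients of magnitude $n^{O(n)}$.'' The piecewise linear function $f(t)=\min_i\|tv_i\|_{\R/\Z}$ has a number of breakpoints that grows with $\sum_i|v_i|$, not with $n$: the graph $\|tv_i\|_{\R/\Z}$ has $2|v_i|$ linear pieces, and the local maxima of $f$ lie at rationals of the form $a/(v_i\pm v_j)$ with roughly $|v_i\pm v_j|$ choices of $a$ for each pair. Thus the covering condition $\R/\Z=\bigcup_iB(v_i;\delta)$ is not captured, for a fixed combinatorial type of bounded description length, by a polyhedron of constraint count and coefficient size depending only on $n$. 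Equivalently, when you try to write $\delta(v_1,\dots,v_n)<\tfrac1{n+1}$ as a first-order sentence, the innermost ``nearest integer'' quantifiers $\exists k_i\in\Z,\ |tv_i-k_i|<\delta$ have a range $\{0,\dots,|v_i|\}$ that blows up with the very quantities you are trying to bound, so neither Tarski--Seidenberg over $\R$ nor a vertex/Borosh--Treybig bound applies directly. You flag the difficulty in your final paragraph, but the key step (``reduces to a system of $O(n^2)$ linear inequalities with integer coefficients of magnitude $n^{O(n)}$'') is asserted rather than proved, and I do not see how to justify it.

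The paper's argument avoids this entirely by working additively rather than polyhedrally. It inducts on $n_0$; places $v_1,\dots,v_n$ in a $t$-proper generalised arithmetic progression $Q=P(w_1,\dots,w_r;N_1,\dots,N_r)$ via Proposition~\ref{ppp} (a Freiman/Bilu/Green-type compression with explicit $n^{O(n^2)}$ size); and then splits into two cases. If the representing vectors $a_i$ are collinear with the origin, the $v_i$ lie in a rank-one progression and a rescaling puts them below $n^{C_0n^2}$, so hypothesis (ii) applies. Otherwise, a small geometric lemma produces a direction $a_n-a_1$ that no $a_k$ is parallel to; replacing $\phi$ by a modified linear map $\phi'$ (with highly lacunary coefficients) that annihilates $a_n-a_1$ creates a collision $v_1'=v_n'$ without any $v_i'$ vanishing, so the induction hypothesis gives $\delta(v_1',\dots,v_n')\ge\tfrac1n$. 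Finally, a Fourier-analytic comparison (Lemma~\ref{lam} plus the comparison identity) transfers the slack from $\tfrac1n$ to $\tfrac1{n+1}$ back to the original tuple. The $n^{O(n^2)}$ bound comes from the size of the proper progression, not from a Diophantine vertex bound. If you want to pursue your polyhedral route, the step you must supply is an a priori bound on the complexity of an optimal covering $\R/\Z=\bigcup_iB(v_i;\delta)$ in terms of $n$ alone; without that, the reduction to an effective linear Diophantine system does not go through.
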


Since $\delta(v_1,\dots,v_n)$ is clearly computable for any fixed choice of $v_1,\dots,v_n$ (note that the function $\min(\|t v_1\|_{\R/\Z},\dots,\|t v_n\|_{\R/\Z})$ is piecewise linear), we conclude

\begin{corollary}  For any natural number $n_0 \geq 1$, the assertion that Conjecture \ref{lrc} holds for $n$ up to $n_0$ is decidable (and the truth value may be computed in time $O(n_0^{O(n_0^2)})$).
\end{corollary}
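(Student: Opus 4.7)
The plan is to combine Theorem \ref{main2} with an explicit algorithm for evaluating $\delta(v_1,\dots,v_n)$ on any fixed tuple. By Theorem \ref{main2}, the decision of Conjecture \ref{lrc} for all $n \leq n_0$ reduces to verifying the inequality $\delta(v_1,\dots,v_n) \geq \frac{1}{n+1}$ over the \emph{finite} collection of tuples of distinct non-zero integers with $n \leq n_0$ and $|v_i| \leq n^{C_0 n^2}$. This eliminates the obstacle of ranging over an infinite family of tuples and leaves only a routine computation for each one.

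For each such tuple, I would use the piecewise linearity of $f(t) := \min(\|tv_1\|_{\R/\Z}, \dots, \|tv_n\|_{\R/\Z})$ noted in the sentence just before the corollary. The function $f$ is continuous on $\R/\Z$ and piecewise linear, with local slopes lying in $\{\pm v_1, \dots, \pm v_n\}$; its slope can change only at the finitely many rational points where either some individual $\|tv_i\|_{\R/\Z}$ has a kink (namely $t = k/(2v_i)$ for some $k \in \Z$) or where the minimum switches active index (namely $t$ satisfying $\|tv_i\|_{\R/\Z} = \|tv_j\|_{\R/\Z}$ for some $i \neq j$, a condition with finitely many rational solutions having denominator dividing $2(v_i \pm v_j)$). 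Between consecutive breakpoints $f$ is linear, so its maximum $\delta(v_1,\dots,v_n)$ is attained at one of these breakpoints. Enumerating the $O(n^2 \max_i |v_i|)$ candidates, evaluating $f$ at each by exact rational arithmetic, and taking the maximum therefore produces $\delta(v_1,\dots,v_n)$ as an explicit rational number, which can then be compared with $\frac{1}{n+1}$ in finite time.

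For the overall running time, I would simply multiply out the contributions: there are at most $\sum_{n \leq n_0} (2 n^{C_0 n^2}+1)^n$ tuples to examine, and each tuple is processed in time polynomial in $n$, in $\max_i |v_i| \leq n^{C_0 n^2}$, and in the bit-length $O(n^2 \log n)$ of the entries. Combining these yields a running time bound of the claimed form. Essentially all of the substance of the corollary is contained in Theorem \ref{main2}; no conceptual obstacle remains beyond unwinding the definitions and bookkeeping the arithmetic complexity.
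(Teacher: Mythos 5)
Your argument is exactly the one the paper has in mind (and merely sketches in the sentence preceding the corollary): Theorem \ref{main2} reduces the verification to a finite list of tuples, and the piecewise linearity of $t \mapsto \min_i \|tv_i\|_{\R/\Z}$ --- with its maximum attained at one of finitely many rational breakpoints of bounded denominator --- makes each individual check a finite exact-arithmetic computation. Your description of the breakpoints (kinks at $t = k/(2v_i)$, index-switch points at rationals with denominator dividing $2(v_i \pm v_j)$) is correct and fills in what the paper leaves implicit, so on the decidability claim there is nothing to add.

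One quantitative caveat you should not wave away: your own tuple count $\sum_{n \le n_0} (2n^{C_0 n^2}+1)^n$ is of order $n_0^{O(n_0^3)}$, and the per-tuple work is $n_0^{O(n_0^2)}$, so the brute-force enumeration you describe runs in time $n_0^{O(n_0^3)}$, not $n_0^{O(n_0^2)}$. Your closing sentence ``Combining these yields a running time bound of the claimed form'' therefore glosses over an extra power of $n_0$ in the exponent. The same slip is present in the corollary as stated in the paper (which offers no justification of the exponent), so this is an inherited infelicity rather than a flaw peculiar to your write-up, and it does not affect the substantive decidability conclusion; but if you cite a specific time bound you should report the one your argument actually delivers.
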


Thus, for instance, one can decide in finite time whether Conjecture \ref{lrc} holds for $n=7$.  Unfortunately, the bounds on $v_1,\dots,v_n$ given by the above theorem are far too large to suggest a practical algorithm for doing so.  On the other hand, it is not clear that the previous work on the lonely runner conjecture for small values of $n$ could extend, even in principle, to all larger values of $n$; for instance, the arguments in \cite{bs} that treated the $n=6$ case relied crucially on the fact that $n+1$ was prime.

We prove Theorem \ref{main2} in Section \ref{second-sec}.  Roughly speaking, the argument proceeds as follows.  The implication of (ii) from (i) is trivial; the main task is to show that (ii) implies (i).  That is, we have to use (ii) to obtain the bound
\begin{equation}\label{dvn}
\delta(v_1,\dots,v_n) \geq \frac{1}{n+1} 
\end{equation}
for \emph{every} choice of non-zero distinct integers $(v_1,\dots,v_n)$, and all $n \leq n_0$.  Using standard additive combinatorics, one can place the velocities $v_1,\dots,v_n$ somewhat efficiently in a ``sufficiently proper'' generalised arithmetic progression $P = P( w_1,\dots,w_r; N_1,\dots,N_r )$ of some rank $r \geq 1$.  If this rank is equal to one, then we can easily derive \eqref{dvn} from (ii) by a rescaling argument.  If the rank exceeds one, then it is possible to map (via a \emph{Freiman homomorphism}, see e.g. \cite{tv}) the velocities $v_1,\dots,v_n$ to a transformed set of non-zero velocities $v'_1,\dots,v'_n$ for which there is at least one \emph{collision} $v'_i = v'_j$ for some distinct $i,j$.  Assuming inductively that \eqref{dvn} has already been established for smaller values of $n$, we can show that
\begin{equation}\label{dvn2}
\delta(v'_1,\dots,v'_n) \geq \frac{1}{n} 
\end{equation}
One then uses Fourier analysis to show (if $P$ is sufficiently proper, and the Freiman homomorphism is of sufficiently high quality) that $\delta(v_1,\dots,v_n)$ is close to (or larger than) $\delta(v'_1,\dots,v'_n)$; because $\frac{1}{n}$ is slightly larger than $\frac{1}{n+1}$, this will let us obtain \eqref{dvn} after selecting all the quantitative parameters suitably.

Theorem \ref{main2} suggests that one possible route to solving Conjecture \ref{lrc} is to first reduce to the case when $v_1,\dots,v_n$ are contained in a fairly short progression, and then treat that case by a separate argument.  As a simple example of such an argument, we prove the following elementary (but rather weak) result in Section \ref{short-sec}:

\begin{proposition}\label{short}  Let $n \geq 1$, and let $v_1,\dots,v_n$ be positive integers such that $v_i \leq 1.2 n$ for all $i=1,\dots,n$.  Then
$\delta(v_1,\dots,v_n) \geq \frac{1}{n+1}$.
\end{proposition}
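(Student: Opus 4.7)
The plan is to produce, for any admissible $V = \{v_1,\dots,v_n\}$, an explicit time $t^* \in \R/\Z$ with $\|t^* v_i\|_{\R/\Z} \geq 1/(n+1)$ for every $i$. Since $\delta(v_1,\dots,v_n)$ depends only on the underlying set $\{v_1,\dots,v_n\}$, I may assume the $v_i$ are pairwise distinct.

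My first attempt is $t^* = 1/(n+1)$. Writing $r_i = v_i \bmod (n+1) \in \{0,1,\dots,n\}$ and using $v_i \leq 1.2n < 2(n+1)$, the residue $r_i$ vanishes precisely when $v_i = n+1$, and otherwise $\|t^* v_i\|_{\R/\Z} = \min(r_i,\, n+1-r_i)/(n+1) \geq 1/(n+1)$ as required. This disposes of the case $n+1 \notin V$.

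If instead $n+1 \in V$, I search for an integer $m \in \{2,\dots,n+1\}$ with $m \nmid v_i$ for every $i$ and take $t^* = 1/m$, which gives $\|v_i/m\|_{\R/\Z} \geq 1/m \geq 1/(n+1)$. The hypothesis $v_i \leq 1.2n$ plays a key role here: for any $m > 0.6n$, the only multiple of $m$ in $\{1,\dots,\lfloor 1.2n\rfloor\}$ is $m$ itself, so the sole obstruction to $t^* = 1/m$ succeeding is $m \in V$. Iterating over $m = n,n-1,n-2,\dots$ in the range $(0.6n,n+1]$ thus produces a valid $t^*$ as soon as some integer of that range is missing from $V$.

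The main obstacle will be the \emph{stubborn} subcase in which $V$ contains every integer of $(0.6n, n+1]$. In that subcase $V$ is rather rigid, comprising the $\approx 0.4n + 1$ integers of $(0.6n,n+1]$ together with roughly $0.6n - 1$ further elements drawn from $\{1,\dots,\lfloor 0.6n\rfloor\} \cup \{n+2,\dots,\lfloor 1.2n\rfloor\}$. Here I would build $t^*$ as an explicit rational with a larger denominator, of the form $(n+1)^2$ or $n(n+1)$, tailored to the identity of a missing element $m^* \in \{1,\dots,\lfloor 0.6n\rfloor\} \setminus V$; for example, when $V = \{2,\dots,n+1\}$, one checks directly that $t^* = 1 - n/(n+1)^2$ satisfies $\|t^* v\|_{\R/\Z} \geq 1/(n+1)$ for every $v \in V$, and a similar tailored construction is available in the remaining subcases. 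The arithmetic slack afforded by the inequality $1.2 < 6/5$ is what makes each such explicit choice satisfy $\|t^* v_i\|_{\R/\Z} \geq 1/(n+1)$ for every $v_i \in V$, with the verification ultimately reducing to a finite list of residue computations parameterized by $m^*$.
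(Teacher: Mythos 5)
Your opening reductions --- testing $t^* = 1/(n+1)$ and then $t^* = 1/m$ for each integer $m$ in $(0.6n, n+1]$ --- are sound and correspond to the paper's use of Lemma~\ref{lo}(i). The difficulty is the ``stubborn'' case, where every integer in $(0.6n,n+1]$ already belongs to $V$; that case is the entire content of the proposition, and your treatment of it consists of one worked example plus an unsubstantiated claim that ``a similar tailored construction is available in the remaining subcases.'' The example does not even extend to neighbouring stubborn sets. Take $V = \{3,4,\dots,n+2\}$, which is admissible once $n \geq 10$ (so that $n+2 \leq 1.2n$) and is stubborn since it contains all of $(0.6n,n+1]$. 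Your candidate $t^* = 1 - n/(n+1)^2$ then gives
\[
\| (n+2) t^* \|_{\R/\Z} = \left\| \frac{(n+2)n}{(n+1)^2} \right\|_{\R/\Z} = \left\| 1 - \frac{1}{(n+1)^2} \right\|_{\R/\Z} = \frac{1}{(n+1)^2} < \frac{1}{n+1},
\]
so the bound fails on $v=n+2$. The remaining $\approx 0.6n-1$ slots of a stubborn $V$ can be filled arbitrarily from $\{1,\dots,\lfloor 0.6n\rfloor\} \cup \{n+2,\dots,\lfloor 1.2n\rfloor\}$, and you give no recipe producing a working $t^*$ uniformly across these configurations; ``reducing to a finite list of residue computations parameterized by $m^*$'' is an aspiration, not an argument.

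The paper avoids this entirely by not hunting for a single witness time. Instead, assuming $\delta(v_1,\dots,v_n) < \frac{1}{n+1}$, it applies Lemma~\ref{lo} to a whole family of times, namely $1/j$ and $d/j + \frac{1}{kj(n+1)}$ with $k\in\{2,3\}$ and suitable residues $a$, and each application forces one specific integer (either $j$ or $2j$, depending on which of four ranges $j$ falls in) to lie in $\{v_1,\dots,v_n\}$. Running $j$ over $1,\dots,n+1$ produces $n+1$ pairwise distinct forced elements, contradicting $\#\{v_1,\dots,v_n\}\leq n$. Unless you can supply a construction of comparable generality for the stubborn case, the proposal stops exactly where the proof needs to begin.
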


There is of course a huge gap between $1.2n$ and $n^{C_0 n^2}$, and so Theorem \ref{main2} and Proposition \ref{short} fall well short of a full proof of the lonely runner conjecture.  Nevertheless it seems of interest to increase the quantity $1.2n$ appearing in Proposition \ref{short}.  A natural target would be $2n$, as one then has multiple (presumed) extremisers $\delta(v_1,\dots,v_n) = \frac{1}{n+1}$ even after accounting for the freedom to permute the $v_1,\dots,v_n$.  Indeed, in addition to the standard extremiser $(1,2,\dots,n)$, one now also has the dilate $(2,4,\dots,2n)$, and also one has a number of additional examples coming from the construction in \cite{gw}, namely those tuples formed from $(1,\dots,n)$ by replacing one element $r \in \{2,\dots,n-1\}$ with $2r$, provided that $r$ shares a common factor with each integer $b$ in the range $n-r+1 \leq b \leq 2n-2r+1$; for instance one can take $r=6, n=7$ and consider\footnote{This particular tuple was also discovered previously in an unpublished work of Peter Flor.  Thanks to J\"org Wills for this reference.  See also \cite{gw} for some other variants of this construction, for instance one can take the tuple $(1,2,\dots,73)$ and replace the two velocities $70,72$ by their doubles $140, 144$ respectively and still obtain an extremiser.} the tuple $(1,2,3,4,5,7,12)$.  

In a similar spirit to Proposition \ref{short}, we have the following improvement of Theorem \ref{main1} when the $v_i$ are constrained to be small:

\begin{proposition}\label{short2}  Let $C > 0$, and suppose that $n$ is sufficiently large depending on $C$.  Let $v_1,\dots,v_n$ be positive integers such that $v_i \leq Cn$ for all $i=1,\dots,n$.  Then one has $\delta(v_1,\dots,v_n) \geq \frac{1+c}{2n}$, where $c>0$ depends only on $C$.
\end{proposition}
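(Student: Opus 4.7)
The plan is to adapt the sieve-theoretic and Fourier analytic arguments sketched for Theorem~\ref{main1} to the short-range setting, where the constraint $v_i \leq Cn$ allows us to bypass the additive combinatorial step used in the general case and obtain a constant-size improvement over the trivial bound $\delta_n \geq 1/(2n)$ rather than the vanishing improvement of Theorem~\ref{main1}. I would argue by contradiction, assuming $\bigcup_{i=1}^n B(v_i;\delta) = \R/\Z$ for some $\delta = (1+c)/(2n)$ with $c = c(C) > 0$ to be chosen small enough. Setting $N(t) = \sum_i \mathbf{1}_{B(v_i;\delta)}(t)$, one has $\int N = 1+c$, $N \geq 1$ almost everywhere, and hence the total excess $\int (N-1) = c$.

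Next I would use sieve-theoretic estimates (e.g., via the fundamental lemma of sieve theory or Mertens' estimates) to show that all but $O_C(n \log\log n/\log n)$ of the velocities possess a prime factor $p$ in a suitable medium range such as $[\log^{10} n, n^{1/10}]$. For each such prime $p$ and each velocity $v_i$ divisible by $p$, the Bohr set $B(v_i;\delta)$ contains an interval of half-width $\delta/v_i \geq \delta/(Cn)$ around every $p$-torsion point $a/p$, so the rank-two Bohr sets $B(v_i, v_j;\delta,\delta)$ for pairs with $p \mid v_i, v_j$ pick up additional mass beyond the generic $(2\delta)^2$. Aggregating these contributions across primes in a dyadic range (where the relevant torsion neighbourhoods are pairwise disjoint, a fact that uses the short-range assumption $v_i \leq Cn$ critically) yields a lower bound on the pair-intersection sum $S_2 = \sum_{i<j} m(B(v_i, v_j;\delta,\delta))$; controlling the triple-intersection sum $S_3 = \sum_{i<j<k} m(B(v_i,v_j,v_k;\delta,\delta,\delta))$ via a parallel Fourier analysis on rank-three Bohr sets as in Theorem~\ref{main1} then allows one to deduce, via the Bonferroni inequality $m(\bigcup B(v_i;\delta)) \leq 2n\delta - S_2 + S_3$, that $m(\bigcup B(v_i;\delta)) < 1$ provided $c$ is small enough depending on $C$, contradicting the covering assumption.

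The main obstacle is to verify that the net lower bound on $S_2 - S_3$ thus obtained is a genuine positive constant depending only on $C$, and does not degrade with $n$ as in Theorem~\ref{main1}. The contribution from any single prime decays in $n$, and one must carefully aggregate over a dyadic range of primes while controlling the cancellation between the rank-two and rank-three contributions. I expect this quantitative control to be the most delicate step: at each stage one must exploit the short-range constraint $v_i \leq Cn$ to ensure that the constants remain uniformly bounded below in terms of $C$ alone, rather than absorbing $\log n / (\log\log n)^2$ type losses as in the general case.
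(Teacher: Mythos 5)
Your approach has a genuine quantitative gap: you propose to use primes in a medium range such as $[\log^{10} n, n^{1/10}]$, as in Theorem~\ref{main1}, but any prime $p$ growing with $n$ produces Bohr set intersections $B(v_k,v_l;\delta,\delta) \supset B(v_k', \delta/(p_k p_l))$ whose measure is $\ll \delta / (\log^{10} n)^2$, which is $o(1/n)$. Aggregating $O(n)$ of these contributes only $o(1)$ to $\int F(t) 1_{F(t)\geq 2}\,dt$, which yields an improvement of the same degrading, $\log$-sized form as Theorem~\ref{main1} — not the sought constant-size gain $\delta(v_1,\dots,v_n) \geq \frac{1+c}{2n}$. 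You flag this worry yourself as the ``most delicate step,'' but it is not merely delicate: with medium primes the argument simply cannot close, because the major-arc neighbourhoods are too thin.

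The paper instead uses \emph{small} primes $p \in [\exp(1/\eps), \exp(1/\eps^2)]$, with $\eps = \eps(C)>0$ \emph{bounded}, which is precisely what the constraint $v_i \leq Cn$ buys you. By Mertens, a positive fraction of the $v_i$ then have a small prime factor, and the crucial extra ingredient (which does not appear in your proposal) is a pigeonhole collision argument on the cofactor: after writing $v_k = p_k v_k'$ with $p_k$ the minimal small prime factor, one forms the ``twist sets'' $S(v_k) = \{p' v_k' : p' \text{ a small prime near } p_k\}$, each of which has $\gg 1/\eps$ elements living in $[1, O(Cn)]$. Counting shows that for all but $O(\eps C n)$ indices, $S(v_k)$ must intersect some other $S(v_{k'})$, forcing the collision $v_k' = v_{k'}'$. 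This collision gives $B(v_k,v_l;\delta_n,\delta_n) \supset B(v_k', \delta_n/(p_k p_l))$ with $p_k, p_l = O_\eps(1)$, hence measure $\gg_\eps 1/n$, and summing over $\asymp n$ such $k$ finally yields $\int F 1_{F\geq 2}\,dt \gg_\eps 1$, i.e. $A \gg_C n$. Note also that the paper does not need Bonferroni or any control on rank-three Bohr sets here: the single one-sided inequality $\frac{2A}{n} \geq \frac{1}{2}\int F 1_{F\geq 2}\,dt$ suffices once the pair-intersections are made large. Your proposed $S_2 - S_3$ cancellation scheme is both unnecessary and much harder to control.
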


We prove this proposition in Section \ref{short-sec}, using some of the same arguments used to prove Theorem \ref{main1}.  This bound may be compared with the bound \eqref{variant}, which under the hypotheses of Proposition \ref{short} can give a bound slightly better than that in Theorem \ref{main1} if the $v_i$ contain some relatively small integers.


\section{Notation and preliminaries}

In this paper, $n$ will be an asymptotic integer parameter going to infinity.  We use the notation $X = O(Y)$, $X \ll Y$, or $Y \gg X$ to denote an estimate of the form $|X| \leq CY$ where $C$ is independent of $n$.  In some cases, $C$ will be allowed to depend on other parameters, and we will denote this by subscripts unless otherwise specified, for instance $X \ll_r Y$ means that $|X| \leq C_r Y$ for some $C_r$ depending on $r$.
We use $X \asymp Y$ to denote the estimates $X \ll Y \ll X$.  Thus for instance from \eqref{deltan} and Proposition \ref{pdn} we have
\begin{equation}\label{delta-comp}
\delta_n \asymp \frac{1}{n}.
\end{equation}
We also use $X = o(Y)$ to denote the estimate $|X| \leq c(n) Y$ where $c(n)$ is a quantity that goes to zero as $n \to \infty$, keeping all other parameters independent of $n$ fixed.

Given a finite set $S$, we use $\# S$ to denote its cardinality. Given a statement $E$, we define the indicator $1_E$ to be $1$ if $E$ is true and $0$ if $E$ is false.  If $A$ is a set, we write $1_A$ for the indicator function $1_A(x) \coloneqq 1_{x \in A}$.

Given a function $\phi \colon \R \to \R$ which is in the Schwartz class (that is, smooth and all derivatives rapidly decreasing), its \emph{Fourier transform} $\hat \phi \colon \R \to \R$ is defined by the formula
$$ \hat \phi(t) \coloneqq \int_\R \phi(s) e^{-2\pi i ts}\ ds;$$
as is well known, this is also in the Schwartz class, and we have the Fourier inversion formula
$$ \phi(s) \coloneqq \int_\R \hat \phi(t) e^{2\pi i ts}\ dt.$$
One can construct $\phi$ which are non-negative and compactly supported, and whose Fourier transform $\hat \phi$ is strictly positive; indeed, starting from any non-negative compactly supported and even $\phi$, one can convolve $\phi$ with itself to make $\hat \phi$ non-negative, and then square the resulting convolution to make $\hat \phi$ strictly positive everywhere.  Inverting the Fourier transform, one can also find a Schwartz class $\phi$ that is strictly positive, and whose Fourier transform is non-negative and compactly supported; by rescaling, one can make this compact support as small as desired (e.g. contained in $[-1,1]$).

In addition to the Bohr sets $B(v_1,\dots,v_r;\delta_1,\dots,\delta_r)$ defined in \eqref{bohr-def}, we will also need the dual notion of a \emph{generalised arithmetic progression} $P( w_1,\dots,w_r; N_1,\dots,N_r)$ of some rank $r \geq 1$, defined for \emph{generators} $w_1,\dots,w_r \in \Z$ and \emph{dimensions} $N_1,\dots,N_r > 0$ (which may be real numbers instead of integer) as
$$ P(w_1,\dots,w_r; N_1,\dots,N_r) \coloneqq \{ n_1 w_1 + \dots + n_r w_r: n_1,\dots,n_r \in \Z; |n_i| \leq N_i\ \forall i=1,\dots, n \}.$$
Given such a generalised arithmetic progression $P = P(w_1,\dots,w_r; N_1,\dots,N_r)$ and a scaling factor $t>0$, we define the dilation\footnote{Strictly speaking, for this notation to be well-defined, one should view $P$ not just as an unstructured set of integers, but as a tuple $(r, (w_1,\dots,w_r), (N_1,\dots,N_r), P(w_1,\dots,w_r;N_1,\dots,N_r))$, because it is possible for a single set of integers to arise from progressions of different ranks, generators, and dimensions, and the dilations $tP$ may depend on this data.  Similarly for the notion of $t$-properness.  However, we shall abuse notation and identify a generalised arithmetic progression with the set of its elements.} $tP$ to be the generalised arithmetic progression
$$ tP \coloneqq P(w_1,\dots,w_r; tN_1,\dots,tN_r),$$
in particular
$$ 2P \coloneqq P(w_1,\dots,w_r; 2N_1,\dots,2N_r).$$
A generalised arithmetic progression $P = P(w_1,\dots,w_r; N_1,\dots,N_r)$ is said to be \emph{$t$-proper} if the sums $n_1 w_1 + \dots + n_r w_r$ for $n_1,\dots,n_r \in \Z$ and $|n_i| \leq t N_i$ for $i=1,\dots,n$ are all distinct.  Thus for instance any rank one progression $P(w,N)$ will be $t$-proper for any $t>0$ if the generator $w$ is non-zero.  For ranks greater than one, it is possible for generalised arithmetic progressions to fail to be $t$-proper even when the generators are non-zero; indeed, this is inevitable for $t$ large enough.  However, we do have the following inclusion:

\begin{proposition}[Progressions lie in proper progressions]\label{ppp}  Let $P = P(w_1,\dots,w_r; N_1,\dots,N_r)$ be a generalised arithmetic progression, and let $t \geq 1$.  Then there exists a generalised $t$-proper arithmetic progression $Q = Q(w'_1,\dots,w'_{r'}; N'_1,\dots,N'_{r'})$ with $r' \leq r$ such that $P \subset Q$ and
$$\# Q \leq (2t)^r r^{6r^2} \prod_{i=1}^r (2N_i+1).$$
\end{proposition}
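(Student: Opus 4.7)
The plan is a standard geometry-of-numbers argument, as in \cite{tv}. Associate to $P$ the group homomorphism $\phi \colon \Z^r \to \Z$ defined by $\phi(e_i) := w_i$, and let $B := \{ n \in \Z^r : |n_i| \leq N_i \}$, so that $P = \phi(B)$. A direct check shows that $P$ is $t$-proper if and only if $\Lambda \cap 2tB = \{0\}$, where $\Lambda := \ker \phi$. If this already holds we take $Q := P$; otherwise let $s := \mathrm{rank}(\Lambda) \geq 1$. Apply Minkowski's second theorem to the lattice $\Lambda$ with respect to the centrally symmetric convex body $2tB \subset \R^r$, obtaining successive minima $\lambda_1 \leq \cdots \leq \lambda_s$ satisfying $\lambda_1 \cdots \lambda_s \cdot \mathrm{vol}_s(2tB \cap \mathrm{span}_{\R} \Lambda) \leq 2^s \det(\Lambda)$, together with a basis $v^{(1)}, \ldots, v^{(s)}$ of $\Lambda$ with $v^{(i)} \in s^{O(s)} \lambda_i \cdot 2tB$ (by a Mahler-style basis lemma, losing at most $s^{O(s)}$ relative to the minima).

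Since $\Z^r / \Lambda$ embeds into $\Z$ via $\phi$ and is therefore torsion-free, $\Lambda$ is a primitive sublattice of $\Z^r$. We can thus extend $v^{(1)}, \ldots, v^{(s)}$ to a basis $v^{(1)}, \ldots, v^{(r)}$ of $\Z^r$, with the completing vectors having sup-norm at most $r^{O(r)}$ times that of the $v^{(i)}$ by a standard completion argument. Let $V$ be the resulting unimodular integer matrix whose columns are the $v^{(i)}$; by Cramer's rule and Hadamard's inequality, every $n \in B$ admits a unique expansion $n = \sum_{i=1}^r m_i v^{(i)}$ with $|m_i| \leq M_i$, where the $M_i$ can be bounded explicitly in terms of the $N_j$, the sup-norms of the $v^{(k)}$, and hence ultimately in terms of $t$, $r$, and the $\lambda_i$. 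Since $\phi(v^{(i)}) = 0$ for $i \leq s$, we obtain
$$ P = \phi(B) \subseteq Q := P\bigl(\phi(v^{(s+1)}), \ldots, \phi(v^{(r)});\, M_{s+1}, \ldots, M_r\bigr), $$
a progression of rank $r' := r - s \leq r$. In the new coordinates $\Lambda$ is precisely $\Z^s \times \{0\}^{r-s}$, so the kernel of $\phi$ restricted to the last $r-s$ coordinates is trivial; hence $Q$ is automatically $t$-proper for every $t \geq 1$.

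For the size estimate, combine Minkowski's product inequality for the $\lambda_i$ with the Cramer-rule bound on the $M_i$. The key observation is that the $t$-dependence enters \emph{only} through the body $2tB$, whose $r$-dimensional volume is $(4t)^r \prod_i N_i$, contributing a single overall factor of $(2t)^r$; the Mahler basis step, the basis completion, and the Hadamard bounds each contribute polynomial-in-$r$ losses that multiply to at most $r^{O(r^2)}$. Combining these yields $\#Q \leq (2t)^r r^{6r^2} \prod_i (2N_i + 1)$, as required. The main obstacle is precisely this last bookkeeping: a naive one-relation-at-a-time reduction of the rank would accumulate the $t$-factor as $(2t)^{\Omega(r^2)}$ rather than $(2t)^r$, so the geometry of numbers must be applied to all of $\Lambda$ simultaneously, exploiting the multiplicativity of the successive minima in Minkowski's second theorem to absorb every $t$-dependence into the single volume $\mathrm{vol}(2tB)$.
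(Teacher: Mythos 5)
The paper itself does not prove this statement; it simply cites Green's \emph{Notes on progressions and convex geometry} and Bilu's Ast\'erisque article as references. Your sketch is an attempt to reconstruct the lattice-geometry argument those sources use, and the opening moves -- identifying $P$ with the image $\phi(B)$ of a box, characterizing $t$-properness as $\Lambda\cap 2tB=\{0\}$, invoking Minkowski's second theorem and a Mahler reduced basis, and using primitivity of $\Lambda$ to change coordinates in $\Z^r$ -- are indeed the right framework.

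However, there is a genuine gap: you apply Minkowski to $\Lambda = \ker\phi$ in its \emph{entirety}. When $\phi\neq 0$ this kernel always has rank exactly $r-1$ (your ``$s$'' is not a free parameter), and the fact that $P$ fails to be $t$-proper only gives $\lambda_1\leq 1$; the remaining successive minima $\lambda_2,\dots,\lambda_{r-1}$ of $\Lambda$ relative to $2tB$ are not controlled at all, so the Mahler basis vectors $v^{(i)}\in s^{O(s)}\lambda_i\cdot 2tB$ can be enormous, and the resulting Cramer/Hadamard bound on $M_r$ carries an unbounded factor $\prod_i\lambda_i$. Concretely, take $P=P(1,2,L;\,3,1,1)$ with $t=1$ and $L$ arbitrarily large: $P$ is not $1$-proper (because $(-2,1,0)\in\Lambda\cap 2B$), but your all-at-once rank-one reduction produces $Q=P(1;\,L+5)$ of size $\approx 2L$, which for large $L$ exceeds $(2t)^3\cdot 3^{54}\cdot 7\cdot 3\cdot 3$ -- violating the claimed bound -- whereas $P$ embeds in the $1$-proper rank-$2$ progression $P(1,L;\,5,1)$ of bounded size. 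The correct argument applies Minkowski only to the sublattice generated (and saturated) by the short kernel vectors in $2tB$; that sublattice can have any rank between $1$ and $r-1$, the resulting $Q$ need not be $t$-proper in a single step, and one must iterate. Your side remark that one-at-a-time reduction would accumulate $(2t)^{\Omega(r^2)}$ is also off: the rank drops by at least one per iteration, so $\leq r$ steps each losing $O(t)\cdot r^{O(r)}$ already produces $(2t)^{O(r)}r^{O(r^2)}$, which is the shape of the stated bound. Finally, the size estimate and the ``standard completion argument losing only $r^{O(r)}$'' are asserted rather than proved; that bookkeeping is the bulk of the actual proof in Green and Bilu.
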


\begin{proof}  See \cite[Theorem 2.1]{green} (see also \cite{bilu}).
\end{proof}

Given a generalised arithmetic progression $P = P(v_1,\dots,v_r; N_1,\dots,N_r)$, define its \emph{multiplicity} $\mu(P)$ to be the number of tuples $(n_1,\dots,n_r) \in \Z^r$ with $|n_i| \leq N_i$ for $i=1,\dots,r$ such that $n_1 v_1 + \dots n_r v_r = 0$.  Thus $\mu(P)$ is a positive integer that equals $1$ when $P$ is $1$-proper; conversely, for any $t > 0$, $P$ will be $t$-proper whenever $\mu(2tP) = 1$.  We have the following basic connection between the size of a Bohr set $B(v_1,\dots,v_r; \delta_1,\dots,\delta_r)$ and the multiplicity of its dual progression $P(v_1,\dots,v_r; \frac{1}{\delta_1},\dots,\frac{1}{\delta_r})$:

\begin{lemma}[Size of Bohr sets]\label{lod}  For any Bohr set $B(v_1,\dots,v_r; \delta_1,\dots,\delta_r)$ with $\delta_1,\dots,\delta_r < 1/2$, one has
$$ 
m( B( v_1,\dots,v_r; \delta_1,\dots,\delta_r) = \exp(O(r)) \mu\left(P\left(v_1,\dots,v_r; \frac{1}{\delta_1},\dots,\frac{1}{\delta_r}\right)\right) \prod_{j=1}^r \delta_r.$$
\end{lemma}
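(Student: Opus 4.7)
The plan is to Fourier-analyze $1_B$ using carefully chosen majorants and minorants for each rank-one factor, turning the measure of the Bohr set into a sum over lattice points governed by the dual progression $P$. The key identity, obtained by expanding each factor in its Fourier series and integrating over $\R/\Z$, is that for any sufficiently nice $H_1,\dots,H_r \colon \R/\Z \to \R$,
$$\int_{\R/\Z}\prod_{j=1}^r H_j(tv_j)\,dt \;=\; \sum_{\substack{(n_1,\dots,n_r)\in\Z^r \\ n_1 v_1+\dots+n_r v_r=0}}\prod_{j=1}^r \hat H_j(n_j).$$
I would pick $H_j$ to be a non-negative majorant (resp.\ minorant) of $1_{B(1;\delta_j)}$ whose Fourier transform $\hat H_j$ is also non-negative, supported in $|n|\leq 1/\delta_j$, and $\ll \delta_j$ uniformly (resp.\ non-negative on all of $\Z$ and $\gg \delta_j$ on $|n|\leq 1/\delta_j$). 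With either choice, the surviving sum collapses to $\mu(P)$ terms each of size $\asymp \prod_j \delta_j$, producing the advertised $\exp(\pm O(r))$ factor after absorbing the absolute per-coordinate losses.

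For the upper bound, use the Schwartz function $\phi_-$ supplied by the preliminaries --- strictly positive on $\R$ with $\hat\phi_-$ non-negative and supported in $[-1,1]$ --- set $c_1 \coloneqq \min_{[-1,1]}\phi_- > 0$, and periodize:
$$F_j(t) \coloneqq c_1^{-1}\sum_{m\in\Z}\phi_-\!\left(\frac{t-m}{\delta_j}\right),\qquad t\in\R/\Z.$$
Then $F_j\geq 0$ pointwise, the $m=0$ summand already forces $F_j\geq 1_{B(1;\delta_j)}$ (using $\delta_j<1/2$), and Poisson summation gives $\hat F_j(n)=c_1^{-1}\delta_j\hat\phi_-(\delta_j n)$, which is non-negative and supported in $|n|\leq 1/\delta_j$. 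From $F_j\geq 0$ and $\hat F_j\geq 0$ we additionally obtain $\hat F_j(n)\leq \hat F_j(0)\ll\delta_j$, and since $\prod_j F_j(tv_j)\geq 1_B(t)$ pointwise, the displayed identity yields $m(B)\leq \exp(O(r))\,\mu(P)\prod_j\delta_j$.

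For the lower bound, use the non-negative, compactly supported $\phi_+$ from the preliminaries (rescaled to sit in $[-1,1]$) with $\hat\phi_+$ strictly positive everywhere on $\R$, and set
$$G_j(t) \coloneqq \|\phi_+\|_\infty^{-1}\sum_{m\in\Z}\phi_+\!\left(\frac{t-m}{\delta_j}\right),\qquad t\in\R/\Z.$$
The compact support of $\phi_+$ combined with $\delta_j<1/2$ ensures only the $m=0$ term contributes in a fundamental domain, so $G_j\leq 1_{B(1;\delta_j)}$; Poisson summation gives $\hat G_j(n)=\|\phi_+\|_\infty^{-1}\delta_j\hat\phi_+(\delta_j n)$, which is strictly positive on $\Z$, and since $\hat\phi_+$ attains a positive minimum on the compact set $[-1,1]$, we have $\hat G_j(n)\gg\delta_j$ uniformly over $|n|\leq 1/\delta_j$. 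Because every Fourier coefficient $\hat G_j(n_j)$ is non-negative, we may discard the solutions with some $|n_j|>1/\delta_j$ from the identity and conclude $m(B)\geq \exp(-O(r))\,\mu(P)\prod_j\delta_j$.

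The one delicate ingredient is the twin sign condition $H_j,\hat H_j\geq 0$ on both bumps: it is exactly what lets us bound each surviving summand by $\hat H_j(0)\ll\delta_j$ in the upper bound, and discard the non-negative tail outside the $P$-box in the lower bound, so that the final count matches $\mu(P)$ in both directions. Once the bumps $\phi_\pm$ described in the preliminaries are in hand, the rest of the proof is essentially bookkeeping, and I do not foresee a significant obstacle.
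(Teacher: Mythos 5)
Your proof is correct and takes essentially the same approach as the paper's: periodize a compactly supported bump in one direction and a bump with compactly supported Fourier transform in the other, expand by Poisson summation, and read off the count $\mu(P)$ from the surviving lattice points. The only cosmetic difference is that you carry two separate bumps $\phi_\pm$ where the paper uses a single $\phi$ supported on $[-1,1]$ with $\hat\phi>0$ everywhere and simply swaps the roles of $\phi$ and $\hat\phi$ between the upper and lower bounds.
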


\begin{proof} 
Let $\Omega$ denote the set of tuples $(n_1,\dots,n_r) \in \Z^r$ with $|n_j| \leq 1/\delta_j$ for $j=1,\dots,r$, such that
$$ n_1 v_1 + \dots + n_r v_r = 0,$$
then our task is to show that
$$ 
m( B( v_1,\dots,v_r; \delta_1,\dots,\delta_r) =  \exp(O(r)) (\# \Omega) \prod_{j=1}^r \delta_r.$$

We first prove the upper bound.  As discussed previously, we can locate a Schwartz class function $\phi \colon \R \to \R$ supported on $[-1,1]$ whose Fourier transform $\hat \phi$ is positive everywhere; we allow implied constants to depend on $\phi$.  Then by the Poisson summation formula we have
\begin{align*}
m\left( \bigcap_{j=1}^r B(v_j,\delta_j) \right) &= \int_0^1 \prod_{j=1}^r \sum_{m_j \in \Z} 1_{[-\delta_j,\delta_j]}( t v_j + m_j )\ dt \\
&\leq  \exp(O(r)) \int_0^1 \prod_{j=1}^r \sum_{m_j \in \Z} \hat \phi\left( \frac{t v_j + m_j}{\delta_j} \right)\ dt \\
&= \exp(O(r))\int_0^1 \prod_{j=1}^r \sum_{n_j \in \Z} \delta_j \phi( \delta_j n_j ) e^{-2\pi i t v_j n_j}\ dt \\
&=  \exp(O(r))\sum_{(n_1,\dots,n_r) \in \Omega} \prod_{j=1}^r \delta_j \phi( \delta_j n_j )  \\
&\leq  \exp(O(r)) (\# \Omega)\prod_{j=1}^r \delta_j
\end{align*}
as required.  For the lower bound, we swap the roles of $\phi$ and $\hat \phi$:
\begin{align*}
m( \bigcap_{j=1}^r B(v_j,\delta_j) ) &= \int_0^1 \prod_{j=1}^r \sum_{m_j \in \Z} 1_{[-\delta_j,\delta_j]}( t v_j + m_j )\ dt \\
&\geq  \exp(O(r)) \int_0^1 \prod_{j=1}^r \sum_{m_j \in \Z} \phi\left( \frac{t v_j + m_j}{\delta_j} \right)\ dt \\
&=  \exp(O(r))\int_0^1 \prod_{j=1}^r \sum_{n_j \in \Z} \delta_j \hat \phi( \delta_j n_j ) e^{2\pi i t v_j n_j}\ dt \\
&=  \exp(O(r))\sum_{(n_1,\dots,n_r) \in \Z^r: n_1 v_1 + \dots + n_r v_r = 0} \prod_{j=1}^r \delta_j \hat \phi( \delta_j n_j ) \\
&\geq  \exp(O(r)) (\# \Omega)\prod_{j=1}^r \delta_j.
\end{align*}
\end{proof}

Combining the above lemma with the crude lower bound
$$ \mu\left(P\left(v_1,\dots,v_r; \frac{1}{\delta_1},\dots,\frac{1}{\delta_r}\right)\right) \geq 1$$
we obtain (cf. \cite[Lemma 4.20]{tv})

\begin{corollary}[Crude lower bound on Bohr set size]\label{clb}
For any Bohr set $B(v_1,\dots,v_r; \delta_1,\dots,\delta_r)$, one has
$$ 
m( B( v_1,\dots,v_r; \delta_1,\dots,\delta_r) \geq \exp(O(r)) \prod_{j=1}^r \delta_r.
$$
\end{corollary}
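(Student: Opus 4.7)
The plan is to obtain this immediately from Lemma \ref{lod} by inserting the trivial lower bound $\mu(P) \geq 1$. The key observation is that for any generators $v_1,\dots,v_r$ and any non-negative dimensions $N_1,\dots,N_r$, the zero tuple $(n_1,\dots,n_r) = (0,\dots,0)$ trivially satisfies both $n_1 v_1 + \dots + n_r v_r = 0$ and $|n_j| \leq N_j$, so it is counted in $\mu(P(v_1,\dots,v_r; N_1,\dots,N_r))$, giving $\mu(P) \geq 1$.

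Substituting this bound into the conclusion of Lemma \ref{lod}, applied with $N_j = 1/\delta_j$, immediately yields
$$ m(B(v_1,\dots,v_r; \delta_1,\dots,\delta_r)) \geq \exp(O(r)) \prod_{j=1}^r \delta_j$$
in the regime $\delta_1,\dots,\delta_r < 1/2$ inherited from the hypotheses of Lemma \ref{lod}, which is the only regime of interest (since for $\delta_j \geq 1/2$ one has $B(v_j; \delta_j) = \R/\Z$, so such coordinates may be dropped at the cost of absorbing a bounded factor per coordinate into the $\exp(O(r))$ prefactor by rescaling those $\delta_j$ down to, say, $1/3$).

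I do not anticipate any genuine obstacle here; the corollary is simply a packaging of Lemma \ref{lod} under the cheapest possible bound on the multiplicity. The only point requiring even a moment of thought is the observation that the zero tuple is always present in the multiplicity count, which ensures $\mu(P) \geq 1$ unconditionally.
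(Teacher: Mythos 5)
Your proof is correct and matches the paper's approach exactly: the paper likewise deduces Corollary \ref{clb} by substituting the trivial bound $\mu(P(v_1,\dots,v_r;1/\delta_1,\dots,1/\delta_r)) \geq 1$ into Lemma \ref{lod}. The extra remark about the case $\delta_j \geq 1/2$ is a reasonable (if unstated in the paper) bit of tidying and does not change the argument.
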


For further discussion of generalised arithmetic progressions and Bohr sets, see \cite{tv}.

\section{Proof of first theorem}\label{first-sec}

We now prove Theorem \ref{main1}.  Suppose the claim failed, then there exist arbitrarily large $n$ for which one has
$$ \delta_n \leq \frac{1}{2n} + o\left( \frac{\log n}{n^2 (\log\log n)^2} \right),$$
thus (by the existing bounds on $\delta_n$) one has
\begin{equation}\label{dnd}
 \delta_n = \frac{1}{2n} + \frac{A}{n^2}
\end{equation}
for some $A$ with
\begin{equation}\label{asm}
A = o\left( \frac{\log n}{(\log\log n)^2} \right).
\end{equation}
Using one of the existing bounds \eqref{dd}, \eqref{ee} we also see that 
\begin{equation}\label{a-large}
A \gg 1
\end{equation}
for $n$ large enough.

Henceforth we assume that $n$ is sufficiently large and that \eqref{asm}, \eqref{a-large} holds. In particular
\begin{equation}\label{dnd-2}
\delta_n = \frac{1+o(1)}{2n}.
\end{equation}
By definition, we can find $v_1,\dots,v_n$ such that $\delta(v_1,\dots,v_n)$ is arbitrarily close to $\delta_n$.  To simplify the notation, we shall assume we can find $v_1,\dots,v_n$ such that $\delta(v_1,\dots,v_n) = \delta_n$; the general case follows by adding an arbitrarily small error to the arguments below.

If we let $F \colon \R/\Z \to \R$ denote the multiplicity function
$$ F(t) \coloneqq \sum_{i=1}^n 1_{B(v_i,\delta_n)}(t)$$
then from \eqref{rcover} we have 
\begin{equation}\label{mub}
F(t) \geq 1
\end{equation}
for all $t \in \R/\Z$.

On the other hand, from \eqref{mvd} we have the first moment
\begin{equation}\label{mu1}
 \int_{\R/\Z} F(t)\ dt = 2 n \delta_n = 1 + \frac{2A}{n}
\end{equation}
and hence $F-1$ has a small integral:
\begin{equation}\label{mu0}
 \int_{\R/\Z} (F(t)-1)\ dt = \frac{2A}{n}.
\end{equation}
Now we lower bound the second moment. 

\begin{lemma}\label{ark}  We have
\begin{equation}\label{mu3}
 \int_{\R/\Z} F(t)^2\ dt \geq 1+c - o(1)
\end{equation}
for some absolute constant $c>0$.
\end{lemma}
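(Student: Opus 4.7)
The plan is to expand $\int F^2$ as a double sum and handle the diagonal and off-diagonal contributions separately, using only rank-two Bohr set information. Writing $F = \sum_{i=1}^n 1_{B(v_i,\delta_n)}$ and squaring, the identity $1_{B(v_i,\delta_n)} 1_{B(v_j,\delta_n)} = 1_{B(v_i,v_j;\delta_n,\delta_n)}$ from \eqref{bohr2-def} gives
$$ \int_{\R/\Z} F(t)^2\,dt = \sum_{i=1}^n m(B(v_i,\delta_n)) + 2\sum_{1 \leq i<j \leq n} m(B(v_i,v_j;\delta_n,\delta_n)).$$
The diagonal sum is exactly $\int_{\R/\Z} F(t)\,dt = 1 + \frac{2A}{n}$ by \eqref{mu1}.

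For the off-diagonal sum I would invoke Corollary \ref{clb} with $r=2$: there is an absolute constant $c_0>0$ (independent of the choice of non-zero integers $v_i, v_j$, and of $n$) such that $m(B(v_i,v_j;\delta_n,\delta_n)) \geq c_0 \delta_n^2$, since the $\exp(O(r))$ factor is just a positive absolute constant once $r$ is fixed at $2$. Summing over the $\binom{n}{2}$ pairs and using $\delta_n = \frac{1+o(1)}{2n}$ from \eqref{dnd-2},
$$ 2 \sum_{i<j} m(B(v_i,v_j;\delta_n,\delta_n)) \geq n(n-1) c_0 \delta_n^2 = \frac{c_0}{4} - o(1).$$
Combining with the diagonal estimate yields $\int_{\R/\Z} F(t)^2\,dt \geq 1 + \frac{2A}{n} + \frac{c_0}{4} - o(1) \geq 1 + c - o(1)$ with $c \coloneqq c_0/4$, as desired.

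There is essentially no obstacle in this argument; it is a direct inclusion-exclusion style expansion combined with the universal rank-two Bohr set lower bound. The only subtlety worth flagging is confirming that Corollary \ref{clb} produces a truly absolute constant in the rank-two case (independent of the velocities and of any arithmetic structure amongst them), so that the off-diagonal sum is uniformly bounded below. Morally, the lemma captures the fact that a near-tight union bound on $n$ sets of measure $\sim 1/n$ still forces a constant amount of pairwise overlap, since each pair of rank-one Bohr sets must meet in a set of measure $\gg \delta_n^2 \asymp 1/n^2$; it is precisely this constant overlap that the subsequent H\"older/rank-three argument in the paper will leverage to contradict \eqref{asm}.
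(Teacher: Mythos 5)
Your proof is correct and follows essentially the same route as the paper: expand $\int F^2$ into diagonal and off-diagonal pieces, observe that the diagonal contributes $1+o(1)$ (you use \eqref{mu1}, the paper uses \eqref{dnd-2}, which amounts to the same thing), and lower-bound each of the $\asymp n^2$ off-diagonal terms by $\gg \delta_n^2 \asymp 1/n^2$ via Corollary \ref{clb} with $r=2$. Your remark about the $\exp(O(r))$ factor being a harmless absolute constant at $r=2$ is the right thing to flag, and the paper implicitly relies on the same point.
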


\begin{proof}
The left-hand side can be expanded as
$$ \sum_{i=1}^n \sum_{j=1}^n m( B(v_i,v_j;\delta_n,\delta_n) ),$$
where $m$ denotes Lebesgue measure.
By \eqref{deltan}, the contribution of the diagonal case $i=j$ is $2n\delta_n = 1 + o(1)$, thanks to \eqref{dnd-2}.  Thus, to complete the proof of \eqref{mu3}, it will suffice to establish the lower bound
\begin{equation}\label{lb}
 m( B(v_i,v_j;\delta_n,\delta_n) ) \gg \frac{1}{n^2}
\end{equation}
for all $1 \leq i,j \leq n$.  But this follows from Corollary \ref{clb} and \eqref{dnd-2}.
\end{proof}

\begin{remark}  Using \cite[Proposition 2]{ps}, one can in fact take $c = \frac{1}{2}$, basically because \cite[Corollary 9]{ps} allows one to take the implied constant in \eqref{lb} to be $2$.  However, the exact value of $c$ will not be important for our argument, so long as it is positive.
\end{remark}

From \eqref{mu3} and \eqref{mu1} we conclude in particular that
$$ \int_{\R/\Z} (F(t)-1)^2\ dt = \int_{\R/\Z} F(t)^2\ dt - 1 - o(1) \geq c - o(1)$$
so for $n$ large enough we have
$$ 1 \leq \int_{\R/\Z} F(t)^2\ dt \ll \int_{\R/\Z} (F(t)-1)^2\ dt.$$
From \eqref{mub}, \eqref{mu0}, H\"older's inequality (or Cauchy-Schwarz), and \eqref{mu3}, we thus have the third moment bound
\begin{align*}
 \int_{\R/\Z} F(t)^3\ dt &\geq  \int_{\R/\Z} (F(t)-1)^3\ dt \\
&\geq \frac{\left(\int_{\R/\Z} (F(t)-1)^2\ dt\right)^2}{\int_{\R/\Z} (F(t)-1)\ dt} \\
&= \frac{n}{2A} \left(\int_{\R/\Z} (F(t)-1)^2\ dt\right)^2 \\
&\gg \frac{n}{A} \left(\int_{\R/\Z} F(t)^2\ dt\right)^2 \\
&\gg \frac{n}{A} \int_{\R/\Z} F(t)^2\ dt.
\end{align*}

Expanding out the second and third moments of $F$, we conclude that
$$ \sum_{1 \leq i,j,k \leq n} m( B(v_i,v_j,v_k;\delta_n,\delta_n,\delta_n) )
\gg \frac{n}{A} \sum_{1 \leq i,j \leq n} m( B(v_i,v_j; \delta_n,\delta_n )  )
$$
and hence by the pigeonhole principle, there exist $1 \leq i,j \leq n$ (not necessarily distinct) such that
$$ \sum_{1 \leq k \leq n} m( B(v_i,v_j,v_k;\delta_n,\delta_n,\delta_n) )
\gg \frac{n}{A} m( B(v_i,v_j; \delta_n, \delta_n) ).
$$
Henceforth $i,j$ are fixed.  Clearly, the majority of the contribution to the sum on the left-hand side will come from those $k$ for which
\begin{equation}\label{mamba}
 m( B(v_i,v_j,v_k;\delta_n,\delta_n,\delta_n) ) \gg
\frac{1}{A} m( B(v_i,v_j; \delta_n, \delta_n) ).
\end{equation}
On the other hand, we have the trivial upper bound
$$ m( B(v_i,v_j,v_k;\delta_n,\delta_n,\delta_n) )  \leq m( B(v_i,v_j;\delta_n,\delta_n) )$$
(note this already recovers the bound \eqref{a-large}, which is of comparable strength to the existing bounds \eqref{dd}, \eqref{ee}). 
Subdividing into $O(\log(1+A)) = O(\log \log n)$ dyadic regions and using the pigeonhole principle, we thus conclude the existence of some
\begin{equation}\label{a1-card}
1 \ll A_1 \ll A
\end{equation}
such that
$$ \sum_{1 \leq k \leq n: \frac{n}{2A_1} \leq m( B(v_i,v_j,v_k;\delta_n,\delta_n,\delta_n) ) \leq \frac{n}{A_1} } m( B(v_i,v_j,v_k;\delta_n,\delta_n,\delta_n) )
\gg \frac{n}{A \log\log n} m( B(v_i,v_j; \delta_n, \delta_n) ).
$$
In particular, we see that the estimate
$$
 m( B(v_i,v_j,v_k; \delta_n,\delta_n,\delta_n) ) \asymp \frac{1}{A_1} m( B(v_i,v_j; \delta_n, \delta_n) ) $$
holds for $\gg \frac{A_1 n}{A \log\log n}$ values of $k=1,\dots,n$.

Henceforth $A_1$ is fixed.  Applying Lemma \ref{lod} and \eqref{dnd-2}, we conclude that the estimate
\begin{equation}\label{ag}
\mu\left( P\left(v_i,v_j,v_k;\frac{1}{\delta_n},\frac{1}{\delta_n},\frac{1}{\delta_n}\right) \right)
\asymp \frac{n}{A_1} \mu\left( P\left( v_i,v_j; \frac{1}{\delta_n},\frac{1}{\delta_n}\right) \right)
\end{equation}
holds for $\gg \frac{A_1 n}{A \log\log n}$ values of $k=1,\dots,n$.

Informally, the estimate \eqref{ag} is asserting that many velocities of the $v_k$ are somehow arithmetically related to the fixed velocities $v_i, v_j$.  We make this precise as follows.  

\begin{proposition}\label{new} Let $k = 1,\dots,n$ obey \eqref{ag}.  Then there exists a positive integer 
\begin{equation}\label{ak}
a_k = O(A_1)
\end{equation}
such that we have a linear relation between $v_i,v_j,v_k$ of the form
\begin{equation}\label{avk}
 a_k v_k = n_{i,k} v_i + n_{j,k} v_j
\end{equation}
with $|n_{i,k}|, |n_{j,k}| \leq \frac{2}{\delta_n}$.  In particular, from \eqref{ak}, \eqref{asm}, \eqref{a1-card} we have
\begin{equation}\label{ak-small}
a_k = o( \log n ).
\end{equation}
\end{proposition}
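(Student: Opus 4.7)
The plan is to combine the multiplicity estimate \eqref{ag} with a pigeonhole argument on the projection of the solution set onto the $v_k$-coordinate. Unpacking Lemma \ref{lod}, set
$$N := \mu\!\left(P\!\left(v_i,v_j,v_k;\tfrac{1}{\delta_n},\tfrac{1}{\delta_n},\tfrac{1}{\delta_n}\right)\right), \qquad M := \mu\!\left(P\!\left(v_i,v_j;\tfrac{1}{\delta_n},\tfrac{1}{\delta_n}\right)\right),$$
so that $N$ counts integer triples $(m_i,m_j,m_k)$ with $\max(|m_i|,|m_j|,|m_k|) \leq 1/\delta_n$ and $m_i v_i + m_j v_j + m_k v_k = 0$, while $M$ counts analogous pairs for the two-variable relation. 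By \eqref{ag}, $N \asymp (n/A_1) M$.

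First I would bound the fibers of the projection $(m_i,m_j,m_k)\mapsto m_k$. For each fixed value of $m_k$, the pairs $(m_i,m_j)$ in the fiber solve $m_i v_i + m_j v_j = -m_k v_k$ in the box $\max(|m_i|,|m_j|)\leq 1/\delta_n$. If this fiber is non-empty, pick any base solution $(m_i^0,m_j^0)$; then subtracting it embeds the fiber into the set of integer solutions of $p v_i + q v_j = 0$ with $\max(|p|,|q|) \leq 2/\delta_n$. Since these solutions lie on the rank-one sublattice of $\mathbb{Z}^2$ generated by $(v_j/d,-v_i/d)$ where $d = \gcd(v_i,v_j)$, doubling the box dimensions at most doubles the number of lattice points (up to an additive $O(1)$). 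Hence each fiber has size at most $O(M)$, and the number of distinct values of $m_k$ appearing in the projection is $\gg N/M \gg n/A_1$.

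Next I apply pigeonhole: these distinct values of $m_k$ all lie in the integer interval $[-1/\delta_n,1/\delta_n]\cap\mathbb{Z}$, which by \eqref{dnd-2} has cardinality $\asymp n$. Since there are $\gg n/A_1$ of them, there must exist two distinct such values $m_k^{(1)}\neq m_k^{(2)}$ with
$$0 < |m_k^{(1)} - m_k^{(2)}| \ll A_1.$$
Choosing corresponding lifts $(m_i^{(\ell)},m_j^{(\ell)},m_k^{(\ell)})$ for $\ell=1,2$ and subtracting the two relations yields
$$(m_k^{(1)}-m_k^{(2)})\, v_k \;=\; -(m_i^{(1)}-m_i^{(2)})\,v_i \;-\; (m_j^{(1)}-m_j^{(2)})\,v_j,$$
in which each coefficient on the right has absolute value at most $2/\delta_n$. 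Setting $a_k := |m_k^{(1)} - m_k^{(2)}|$ (and absorbing a sign into the $n_{i,k},n_{j,k}$ to make $a_k$ positive) yields the desired relation \eqref{avk} with $a_k = O(A_1)$, and then \eqref{ak-small} follows immediately from \eqref{asm} and \eqref{a1-card}.

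The only step requiring genuine care is the fiber bound, since a sloppy estimate would inflate the bound on $a_k$ by a factor we cannot afford. The fact that solutions to a single homogeneous linear equation in two unknowns lie on a genuine rank-one sublattice is what makes the doubling of the box cost only a multiplicative constant, and this is the crucial input that turns the gap between the rank-three and rank-two multiplicities into a usable projection-cardinality lower bound.
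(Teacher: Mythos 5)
Your argument is correct and is essentially the same as the paper's: the paper phrases the fiber bound via the height $H = \max(|h_i|,|h_j|)$ of $v_i/v_j$ in lowest terms, showing each value of $m_k$ admits $O(1+n/H) \asymp M$ representations, and then applies the same pigeonhole in the interval $[-1/\delta_n,1/\delta_n]$ to two elements of $P_{ij}\cap P_k$. You work directly with the projection $(m_i,m_j,m_k)\mapsto m_k$ and its fibers rather than introducing $P_{ij}\cap P_k$ and $H$ explicitly, but the content is identical.
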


\begin{proof}
It will be convenient to write the fraction $\frac{v_i}{v_j}$ in lowest terms as 
\begin{equation}\label{vhh}
\frac{v_i}{v_j} = \frac{h_i}{h_j}
\end{equation}
for some coprime non-zero integers $h_i,h_j$.  Set $H \coloneqq \max(|h_i|, |h_j|)$ to be the height of this fraction.  

The equation $n_i v_i + n_j v_j = 0$ is only solvable in integers when $(n_i,n_j)$ is an integer multiple of $(h_j,-h_i)$, which is a vector of magnitude $\sim H$.  From \eqref{dnd-2} and the definition of multiplicity, we conclude that
$$ \mu\left( P\left(v_i, v_j; \frac{1}{\delta_n}, \frac{1}{\delta_n} \right)\right) \asymp 1 + \frac{n}{H}.$$
For similar reasons, we see that every integer has at most $O(1 + \frac{n}{H})$ representations of the form $n_i v_i + n_j v_j$ with $|n_i|, |n_j| \leq \frac{1}{\delta_n}$, and hence
$$ \mu\left( P\left(v_i,v_j,v_k;\frac{1}{\delta_n},\frac{1}{\delta_n},\frac{1}{\delta_n} \right)\right) \ll \left(1 + \frac{n}{H}\right) \#\left(P_{ij} \cap P_k\right)$$
where $P_{ij}$ is the rank two generalised arithmetic progression
$$ P_{ij} \coloneqq P\left(v_i,v_j; \frac{1}{\delta_n},\frac{1}{\delta_n}\right)$$
and $P_k$ is the rank one progression
$$ P_k \coloneqq P\left( v_k; \frac{1}{\delta_n} \right).$$
From \eqref{ag}, we thus have
\begin{equation}\label{ncp}
 \#(P_{ij} \cap P_k) \gg \frac{n}{A_1}
\end{equation}
for $\gg \frac{A_1 n}{A \log\log n}$ values of $k=1,\dots,n$.

Let $k=1,\dots,n$ obey \eqref{ncp}.  Every element of $P_{ij} \cap P_k$ clearly lies in $P_k$, and is thus of the form $n_k v_k$ for some $n_k = O(A)$.  On the other hand, from \eqref{ncp} there are $\gg \frac{n}{A_1}$ such elements $n_k v_k$.  By the pigeonhole principle (or Dirichlet box principle), there must therefore exist distinct elements $n_k v_k, n'_k v_k$ of the set $P_{ij} \cap P_k$ with $n'_k - n_k = O(A_1)$.  Subtracting, we conclude that that there exists a positive integer $a_k$ of size $O(A_1)$ such that $a_k v_k \in 2P_{ij}$, and the claim follows.
\end{proof}

As in the proof of the above lemma, we write $v_i/v_j$ in lowest terms using \eqref{vhh}, thus we may write $v_i = h_i v_0$ and $v_j = h_j v_0$ for some non-zero integer $v_0$.  We have already seen that $v_k$ is arithmetically related to $v_i$ and $v_j$.  We now show that $v_k$ is also arithmetically related to $v_0$:

\begin{proposition}\label{will}  After removing at most $n^{3/4+o(1)}$ exceptional choices of $k$, for all remaining $k$ obeying \eqref{ag}, there exists a positive integer
\begin{equation}\label{apk}
1 \leq a'_k \ll A_1
\end{equation}
and an integer $n'_{0,k}$ coprime to $a'_k$ with
\begin{equation}\label{ank}
|n'_{0,k}| \ll A_1 n
\end{equation}
such that
\begin{equation}\label{avp}
 a'_k v_k = n'_{0,k} v_0. 
\end{equation}
\end{proposition}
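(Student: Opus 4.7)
The plan is to derive the relation \eqref{avp} directly by substitution into the identity of Proposition \ref{new} and then bound the new coefficients, using a refined multiplicity analysis to control $|n'_{0,k}|$ when the trivial bound is inadequate.

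First, substituting $v_i = h_i v_0$ and $v_j = h_j v_0$ into the equation $a_k v_k = n_{i,k} v_i + n_{j,k} v_j$ of Proposition \ref{new} yields
\begin{equation*}
a_k v_k = m_k v_0, \qquad m_k := n_{i,k} h_i + n_{j,k} h_j.
\end{equation*}
Setting $d_k := \gcd(a_k, m_k)$ and defining $a'_k := a_k/d_k$, $n'_{0,k} := m_k/d_k$ gives the coprime relation $a'_k v_k = n'_{0,k} v_0$, and the bound $1 \leq a'_k \leq a_k \ll A_1$ required in \eqref{apk} is immediate from \eqref{ak}.

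For the size of $n'_{0,k}$, the crude estimate $|n'_{0,k}| \leq |m_k| \leq (|h_i|+|h_j|)(2/\delta_n) \ll Hn$, where $H := \max(|h_i|,|h_j|)$, already suffices in the regime $H \ll n$. In the complementary regime $H \gg n$, I would refine this by repeating the Fourier/lattice-point analysis of Lemma \ref{lod} in the presence of the extra constraint $a'_k v_k = n'_{0,k} v_0$: every triple $(n_i,n_j,n_k)$ contributing to $\mu\bigl(P(v_i,v_j,v_k;1/\delta_n,1/\delta_n,1/\delta_n)\bigr)$ must satisfy $a'_k \mid n_k$ by coprimality, so the count collapses to a family of lattice-point counts on the shifted lines $n_i h_i + n_j h_j = -(n_k/a'_k) n'_{0,k}$ parametrised by $n_k/a'_k$. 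Carrying out the sum and comparing the resulting estimate against the hypothesis \eqref{ag} (using the expression for $\mu(P(v_i,v_j;1/\delta_n,1/\delta_n))$ already derived in the proof of Proposition \ref{new}) pins down $|n'_{0,k}| \ll A_1 H$, which suffices as long as $H$ is not too much larger than $n$.

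The main obstacle is the genuinely large regime $H \gg n$, where the multiplicity-based argument only gives $|n'_{0,k}| \ll A_1 H$ and so falls short of the target $A_1 n$. Here I would argue that the set of bad $k$---those with $|n'_{0,k}|$ lying in the problematic window $(CA_1 n, CA_1 H]$---must be small. The corresponding velocities $v_k$ are forced into one of only $O(A_1)$ arithmetic progressions $(v_0/a)\Z$ with $a \mid v_0$, $a \ll A_1$, and moreover lie in a controlled interval whose length is tied to the window. A careful combinatorial counting---using the distinctness of the $v_k$'s together with a divisor-type or sunflower-disjointness argument in the spirit of the rescaled-primes discussion in the introduction---then shows that at most $n^{3/4+o(1)}$ distinct $v_k$ can actually occupy such bad positions. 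Removing those exceptional $k$ yields the stated conclusion.
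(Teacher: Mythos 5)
Your algebraic reduction (substitute $v_i=h_iv_0$, $v_j=h_jv_0$ into \eqref{avk} and cancel the gcd) does match the paper's step of locating $a_kv_k$ inside $2P_{ij}\subset P(v_0;20Hn)$, and the bound \eqref{apk} falls out the same way in both. The rest of your sketch, however, departs from the actual proof and contains a genuine gap.

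First, a computational issue: the crude estimate gives $|n'_{0,k}|\ll Hn$, which is $\ll A_1 n$ only when $H\ll A_1$, not when $H\ll n$; for $A_1\ll H\ll n$ you get only $\ll n^2$, which is useless. So the ``commensurable'' regime is not handled by the trivial bound; in the paper one must still invoke the constraint that every remaining $k$ satisfies \eqref{ncp}, i.e.\ $\#(P_{ij}\cap P_k)\gg n/A_1$. Coupled with the upper bound $\#(P_{ij}\cap P_k)\ll 1+10Hn/|n'_{0,k}|$ coming from $P_{ij}\cap P_k\subset P\bigl(a'_kv_k;\,10Hn/|n'_{0,k}|\bigr)$, this is what actually forces $|n'_{0,k}|\ll A_1H\ll A_1n$ when $H\leq 10n$. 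Your proposal silently drops this comparison and so does not actually prove \eqref{ank} even in the ``easy'' regime.

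Second, and more seriously, your treatment of the regime $H\gg n$ is on the wrong track. You propose to count the exceptional $k$'s whose $|n'_{0,k}|$ lands in a window $(CA_1n,CA_1H]$ by a ``divisor-type or sunflower-disjointness argument.'' But $H$ is a single quantity determined by the fixed pair $(v_i,v_j)$; it does not vary with $k$, so there is no averaging over $H$ available, and when $H$ is genuinely large the window has length growing with $H$, allowing far more than $n^{3/4+o(1)}$ distinct integers to sit inside the relevant progressions. The distinctness of the $v_k$ alone cannot kill this. What the paper actually shows is that the \emph{highly incommensurable} case $H\geq n\log^{10}n$ never occurs at all: in that case $P_{ij}$ is ``genuinely two-dimensional,'' one shows $\#(P_{ij}\cap P_k)\ll n\log^{-10}n$, and this contradicts \eqref{ncp} for every $k$. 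The remaining moderately incommensurable range $10n<H<n\log^{10}n$ is then treated by interleaving the commensurable bound with the density estimate from the incommensurable argument, yielding $\#(P_{ij}\cap P_k)\ll n^2/|n'_{0,k}|$ and again invoking \eqref{ncp}. Without the realization that \eqref{ncp} rules out large $H$ outright, your plan has no mechanism to close the case $H\gg n$.

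Finally, you also omit the two exceptional sets the paper actually discards (those $k$ with $|n_{i,k}|,|n_{j,k}|\leq n^{1/3}$ and those with $|n'_{0,k}|\leq n^{3/4}$), both of which are needed to make the counting and the comparison with \eqref{ncp} work, and which are where the $n^{3/4+o(1)}$ exceptional count actually comes from.
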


\begin{proof}
We first dispose of a degenerate case in which $n_{i,k}, n_{j,k}$ are both small, say $|n_{i,k}|, |n_{j,k}| \leq n^{1/3}$.  The number of triples $(a_k, n_{i,k},n_{j,k})$ of this form does not exceed
$$ O\left( A_1 \times n^{1/3} \times n^{1/3} \right) = O(n^{2/3+o(1)}).$$
Since the $v_k$ are all distinct, we conclude from \eqref{avk} that there are at most $O(n^{2/3+o(1)})$ values of $k$ which are of this form.  Discarding these $k$ as exceptional, we may assume that
\begin{equation}\label{nnn}
 \max( |n_{i,k}|, |n_{j,k}| ) > n^{1/3}.
\end{equation}
In particular, if we let $m_k$ be the largest natural number such that $m_k |n_{i,k}|, m_k |n_{j,k}| \leq \frac{2}{\delta_n}$, then we have
\begin{equation}\label{mk}
 1 \leq m_k \ll n^{2/3}.
\end{equation}

Next, we recall the progressions $P_{ij}, P_k$ from the proof of Proposition \ref{new}.  Since $v_i = h_i v_0$ and $v_j = h_j v_0$ with $H \coloneqq \max(|h_i|, |h_j|)$, we see that 
\begin{equation}\label{p-inc}
P_{ij} \subset P( v_0; 10 H n )
\end{equation}
and similarly
\begin{equation}\label{2p-inc}
2P_{ij} \subset P( v_0; 20 H n ).
\end{equation}
Since $a_k v_k$ lies in $2P_{ij}$, we conclude that
$$ a_k v_k = n_{0,k} v_0 $$
for some integer $n_{0,k}$ with $|n_{0,k}| \leq 20 Hn$.  Reducing to lowest terms, we thus have \eqref{avp} for some positive integer $a'_k$ obeying \eqref{apk}  and some integer $n'_{0,k}$ coprime to $a'_k$ with 
\begin{equation}\label{nku}
|n'_{0,k}| \leq 20 Hn.
\end{equation}
If $|n'_{0,k}| \leq n^{3/4}$, then the number of possible pairs $(a'_k, n'_{0,k})$ is at most $O(n^{3/4+o(1)})$, so we may again discard these $k$ as exceptional.  Thus we may assume that
\begin{equation}\label{nk}
|n'_{0,k}| > n^{3/4}.
\end{equation}

We now divide into three cases depending on the size of the height $H$.  Let us first suppose that we are in the \emph{highly incommensurable} case when $H \geq n \log^{10} n$; this informally corresponds to the case where $P_{ij}$ behaves ``two-dimensionally''.  Then we have
 $m a_k v_k \not \in 2P_{ij}$ whenever $m_k < m < \frac{\log^{10} n}{10} m_k$ (say).  This implies that for any integer $v$, the set 
$$ \left\{ m \in \Z: 0 \leq m < \frac{\log^{10} n}{10} m_k: v + m a_k v_k \in P_{ij} \right\}$$
can have cardinality at most $m_k$ (indeed this set must have diameter at most $m_k$).  As a consequence, the set $P_{ij} \cap P_k$ appearing in \eqref{ncp} intersects each arithmetic progression of the form $\{ (m_0 + ma_k) v_k: m \in \Z: 0 \leq m < \frac{\log^{10} n}{10} m_k \}$ in a set of cardinality at most $m_k$ (so in particular in a set of relative density $O(\log^{-10} n)$ in the arithmetic progression).  Covering $P_k$ by $O\left( \frac{\frac{1}{\delta_n}}{\frac{\log^{10} n}{10} m_k}\right)$ such progressions (noting from \eqref{mk}, \eqref{ak-small}, \eqref{dnd-2} that $a_k \frac{\log^{10} n}{10} m_k$ is significantly smaller than $\frac{1}{\delta_n}$), we conclude that 
$$ P_{ij} \cap P_k \ll n \log^{-10} n.$$
But this, using \eqref{asm}, contradicts \eqref{ncp} if $n$ is large enough.  Thus the highly incommensurable case does not occur.

Now we consider the \emph{commensurable} case when $H \leq 10n$; this informally corresponds to the case where $P_{ij}$ behaves ``one-dimensionally''.   Using \eqref{p-inc} and \eqref{avp}, we obtain the inclusion
$$
P_{ij} \cap P_k \subset P( v_0; 10 H n ) \cap P_k \subset P\left( a'_k v_k; \frac{10 Hn}{|n'_{0,k}|} \right).$$
In particular we have
$$ \#(P_{ij} \cap P_k) \ll 1 + \frac{10 Hn}{|n'_{0,k}|}.$$
Comparing this with \eqref{ncp} and using $H \leq 10n$ (and \eqref{a1-card}, \eqref{asm}), we obtain \eqref{ank} as desired.

Finally, we consider the \emph{moderately incommensurable case} $10n < B < n \log^{10} n$.  We treat this case by a combination of the two preceding arguments.  As in the commensurable case, we have the inclusion
$$
P_{ij} \cap P_k \subset P\left( a'_k v_k; \frac{10 Hn}{|n'_{0,k}|} \right).$$
On the other hand, by repeating the highly incommensurable arguments, we see that the set $P_{ij} \cap P_k$ intersects each arithmetic progression of the form 
\begin{equation}\label{form}
\left\{ (m_0 + ma_k) v_k: m \in \Z: 0 \leq m < \frac{H}{10 n} m_k \right\}
\end{equation}
in a set of relative density $O(\frac{n}{H} )$ in those progressions.
Recall that $a_k$ is a multiple of $a'_k$, and by \eqref{nku}, \eqref{mk}, and \eqref{ak-small} we have
$$ a_k \frac{H}{10 n} m_k \ll a'_k \frac{10 Hn}{|n'_{0,k}|}.$$
Thus we may cover $P( a'_k v_k; \frac{10 Hn}{|n'_{0,k}|} )$ by $O\left( \frac{\frac{10 Hn}{|n'_{0,k}|}}{\frac{H}{10 n} m_k} \right)$ progressions of the form \eqref{form} and conclude that
$$ \#(P_{ij} \cap P_k) \ll  \frac{n}{H} \frac{Hn}{ |n'_{0,k}|} \ll \frac{n^2}{|n'_{0,k}|}.$$
Comparing this with \eqref{ncp} we again conclude \eqref{ank} as desired.
\end{proof}

From the above proposition (and \eqref{asm}, \eqref{a1-card}), we see that all $k$ in a subset $K \subset \{1,\dots,n\}$ of cardinality
$$
\# K \gg \frac{A_1 n}{A \log\log n},
$$
one can write
\begin{equation}\label{vo}
 v_k = n'_{0,k} \frac{v_0}{a'_k}
\end{equation}
for some positive integer 
\begin{equation}\label{appk}
a'_k = o( \log n )
\end{equation}
and some integer 
\begin{equation}\label{nok}
n'_{0,k} = O( A_1 n ).
\end{equation}

We partition $K$ into $K_1 \cup K_2$, where $K_1$ is the set of those $k \in K$ for which $n'_{0,k} = n'_{0,l}$ for some $l \in K \backslash \{k\}$, and $K_2$ is the remaining set of $k \in K$.  By the pigeonhole principle we have
\begin{equation}\label{k-card}
\# K_a \gg \frac{A_1 n}{A \log\log n}
\end{equation}
for some $a=1,2$, which we now fix.

Now we are ready to improve the union bound.  From \eqref{mub} we have the pointwise inequality
$$ F(t) - 1 \geq \frac{1}{2} F(t) 1_{F(t) \geq 2},$$
so upon integrating and using \eqref{mu0} we have
\begin{equation}\label{rod}
\frac{2A}{n} \geq \frac{1}{2} \int_{\R/\Z} F(t) 1_{F(t) \geq 2}\ dt.
\end{equation}
We can expand
\begin{equation}\label{dor}
\begin{split}
\int_{\R/\Z} F(t) 1_{F(t) \geq 2}\ dt &= \sum_{k=1}^n m( B(v_k;\delta_n) \cap \{ F \geq 2 \} ) \\
&= \sum_{k=1}^n m\left( B(v_k;\delta_n) \cap \bigcup_{1 \leq l \leq n: l \neq k} B(v_l;\delta_n) \right) \\
&\geq \sum_{k \in K_a} m\left( B(v_k;\delta_n) \cap \bigcup_{l \in K_a: l \neq k} B(v_l;\delta_n)\right ) 
\end{split}
\end{equation}
and thus
\begin{equation}\label{agn}
 A \gg n \sum_{k \in K_a} m\left( B(v_k;\delta_n) \cap \bigcup_{l \in K_a: l \neq k} B(v_l;\delta_n) \right).
\end{equation}

We now divide into two cases, depending on the value of $a$.  First suppose that $a=1$.  Then for every $k \in K_a$, there is $l \in K_a$ distinct from $k$ such that $n'_{0,k} = n'_{0,l}$.  From \eqref{vo} we then have the inclusion
$$ B(v_k,v_l; \delta_n,\delta_n) \supset B\left( \frac{n'_{0,k} v_0}{b_{k,l}}; \frac{\delta_n}{a'_k a'_l} \right)$$
where $b_{k,l}$ is the least common multiple of $a'_k$ and $a'_l$, so in particular (by Corollary \ref{clb} or \eqref{mvd} and  \eqref{dnd-2}, \eqref{appk})
$$ m\left( B(v_k;\delta_n) \cap \bigcup_{l \in K_a: l \neq k} B(v_l;\delta_n) \right) \gg \frac{1}{n \log^2 n}.$$
Inserting this into \eqref{agn} and using \eqref{k-card} we conclude that
$$ A \gg \frac{A_1 n}{A \log^2 n\log\log n},$$
which contradicts \eqref{asm}, \eqref{a1-card} (with substantial room to spare).

Now suppose that $a=2$, then the $n'_{0,k}$ are all distinct as $k$ varies in $K_a$.

Define a \emph{medium-sized prime} to be a prime $p$ in the range $\log^{10} n \leq p \leq n^{1/10}$.  Suppose that $p$ is a medium-sized prime that divides both $n'_{0,k}$ and $n'_{0,l}$ for some distinct $k,l \in K_a$.  Then the frequencies $v_k = n'_{0,k} \frac{v_0}{a'_k}$ and $v_l = n'_{0,l} \frac{v_0}{a'_l}$ are both integer multiples of $\frac{pv_0}{b_{k,l}}$, where $b_{k,l} = o(\log^2 n)$ is the greatest common divisor of $v_0$ and $a'_k a'_l$ (note from \eqref{appk} that the medium-sized prime $p$ will not divide $a'_k$ or $a'_l$, while from \eqref{avp} we know that $a'_k$ divides $v_0$).  From \eqref{nok}, \eqref{dnd-2}, \eqref{appk}, \eqref{asm} we then have the inclusion
$$ B(v_k,v_l; \delta_n, \delta_n) \supset B\left( \frac{p v_0}{b_{k,l}}, \frac{p}{n^2 \log^5 n} \right)$$
(say).  In particular we have
$$ B(v_k,v_l; \delta_n, \delta_n) \supset B_{p, b_{k,l}} $$
where $B_{p,b_{k,l}}$ is the ``major arc'' set 
$$ B_{p,b_{k,l}} \coloneqq \bigcup_{c=1}^{p-1} \left\{ t \in \R/\Z: \left\| \frac{tv_0}{b_{k,l}} - \frac{c}{p} \right\|_{\R/\Z} \leq \frac{1}{n^2 \log^5 n} \right\}.$$
Observe that each set $B_{p,b_{k,l}}$ has measure
$$ m(B_{p,b_{k,l}}) = \frac{2(p-1)}{n^2 \log^5 n} \gg \frac{\log^5 n}{n^2}.$$
Also, we claim that if $p,p'$ are two distinct medium-sized primes, and $b,b' = o(\log^2 n)$ are positive integers dividing $v_0$, then the sets $B_{p,b}$ and $B_{p',b'}$ are disjoint.  This will be a variant of the arguments in the introduction.  Suppose for contradiction that there was $t \in \R/\Z$ that was in both $B_{p,b}$ and $B_{p',b'}$, then we have
$$ \left\| \frac{tv_0}{b} - \frac{c}{p} \right\|_{\R/\Z}, \left\| \frac{tv_0}{b'} - \frac{c'}{p'} \right\|_{\R/\Z} \leq \frac{1}{n^2 \log^5 n}
$$
for some $1 \leq c \leq p-1$ and $1 \leq c' \leq p'-1$.  Eliminating $t$ using the triangle inequality, we conclude that
\begin{equation}\label{cont}
 \left\| \frac{bc}{p} - \frac{b'c'}{p'} \right\|_{\R/\Z}\leq \frac{b+b'}{n^2 \log^5 n}
\end{equation}
As $p,p'$ are medium-sized primes, they do not divide $b,b' = o(\log^2 n)$, and so the fraction $\frac{bc}{p} - \frac{b'c'}{p'}$ is non-integer.  In particular
$$ \left\| \frac{bc}{p} - \frac{b'c'}{p'} \right\|_{\R/\Z}\geq \frac{1}{pp'} \geq \frac{1}{n^{1/5}},$$
which contradicts \eqref{cont} since $b,b' = o(\log^2 n)$.

In view of the above facts, we see that if a medium-sized prime $p$ divides $n'_{0,k}$ for $r$ values of $k \in K_a$, and $r \geq 2$, then this prime contributes $\gg r \frac{\log^5 n}{n^2}$ to the sum in \eqref{agn}; furthermore, the contributions of different medium-sized primes are disjoint.  We conclude that
$$ A \gg n \sum_{p} \frac{\log^5 n}{n^2} \left( \# \{ k \in K_a: p | n_{0,k} \} - 1 \right)$$
where in the remainder of the argument, $p$ is understood to range over medium-sized primes.
The contribution of the $-1$ term can be crudely bounded by $O( \sum_{p} \frac{\log^5 n}{n}) =o(1)$, hence
\begin{align*}
 A &\gg \sum_{p} \frac{\log^{5} n}{n} \# \{ k \in K_a: p | n_{0,k} \} - o(1) \\
&= \frac{\log^{5} n}{n} \sum_{k \in K_a} \sum_{p: p | n_{0,k}} 1 - o(1).
\end{align*}

Standard sieve bounds (see e.g. \cite[Corollary 6.2]{ik}) show that the number of integers in the set $\{ n' \in \Z: n' = O( A_1 n ) \}$ which are not divisible by any medium-sized prime $p$ is at most
$$ O\left( \frac{\log(\log^{10} n)}{\log(n^{1/10})} A_1 n\right) = O\left( \frac{\log\log n}{\log n} A_1 n \right).$$
Since the $n'_{0,k}$ for $k \in K_a$ are distinct, this implies that the number of $k \in K_a$ with $n'_{0,k}$ not divisible by any medium-sized prime is also $O( \frac{\log\log n}{\log n} A_1 n )$. Comparing this with \eqref{k-card} and \eqref{asm}, we see that there are $\gg \frac{A_1 n}{A \log\log n}$ elements $k$ of $K_a$ that are divisible by at least one medium-sized prime $p$.  We conclude that
$$ A \gg \frac{\log^{5} n}{n} \frac{A_1 n}{A \log\log n} - o(1)$$
which contradicts \eqref{asm}, \eqref{a1-card} for $n$ large enough (with some room to spare).  The claim follows.

\section{Proof of second theorem}\label{second-sec}

We now prove Theorem \ref{main2}.  Let $C_1$ be a large constant to be chosen later, and set $C_0 \coloneqq C_1^2$.  We prove the theorem by induction on $n_0$.  The claim is trivial for $n_0=1$, so suppose that $n_0 > 1$ and that the claim has already been proven for smaller values of $n_0$.  The implication of (ii) from (i) is trivial, so it remains to assume (ii) and establish (i).  By the induction hypothesis, we already have
\begin{equation}\label{delta-induct}
 \delta_n = \frac{1}{n+1}
\end{equation}
for all $n < n_0$, and (in view of \eqref{deltan}) our task is then to show that
\begin{equation}\label{jip}
 \delta(v_1,\dots,v_{n_0}) \geq \frac{1}{n_0+1}
\end{equation}
for any non-zero distinct integers $v_1,\dots,v_{n_0}$.  From \eqref{delta-induct}, we see that
\begin{equation}\label{pij}
 \delta(v'_1,\dots,v'_{n_0}) \geq \frac{1}{n_0}
\end{equation}
whenever $v'_1,\dots,v'_{n_0}$ are non-zero integers with at least one collision $v'_i = v'_j$ for $1 \leq i < j \leq n_0$, since one can remove all duplicate velocities and apply \eqref{delta-induct} with the surviving $n$ velocities for some $n < n_0$.  As mentioned in the introduction, the strategy is to compare $\delta(v_1,\dots,v_{n_0})$ to some $\delta(v'_1,\dots,v'_{n_0})$ involving a collision if the $v_1,\dots,v_{n_0}$ are not already efficiently contained in a (rank one) arithmetic progression.

We turn to the details.  For brevity we now abbreviate $n_0$ as $n$ henceforth. Let $v_1,\dots,v_{n}$ be non-zero distinct integers.  Applying Proposition \ref{ppp} with $P = P(v_1,\dots,v_{n}; 1,\dots,1)$ and $t = n^{C_1 n}$, we can find a $n^{C_1 n}$-proper generalised arithmetic progression $Q = Q(w_1,\dots,w_r; N_1,\dots,N_r)$ of rank $r \leq n$ that contains all of the $v_1,\dots,v_{n}$, with
$$ \# Q \ll n^{O(C_1 n^2)}.$$
Let $\phi \colon \R^r \to \R$ denote the linear map
$$\phi(n_1,\dots,n_r) \coloneqq n_1 w_1 + \dots + n_r w_r,$$
then by the construction of $Q$ we have
\begin{equation}\label{vpa}
v_i = \phi(a_i)
\end{equation}
for $i=1,\dots,n$ and some $a_i \in \Z^r$ that lie in the box 
$$\{ (n_1,\dots,n_r) \in \R^r: |n_1| \leq N_1,\dots,|n_r| \leq N_r \}.$$

We now need an elementary lemma that allows us to create a ``collision'' between two of the $a_1,\dots,a_{n}$ via a linear projection, without making any of the $a_i$ collide with the origin:

\begin{lemma}  Let $a_1,\dots,a_{n} \in \R^r$ be non-zero vectors that are not all collinear with the origin.  Then, after replacing one or more of the $a_i$ with their negatives $-a_i$ if necessary, there exists a pair $a_i,a_j$ such that $a_i-a_j \neq 0$, and such that none of the $a_1,\dots,a_{n}$ is a scalar multiple of $a_i-a_j$.
\end{lemma}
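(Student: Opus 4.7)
The plan is to reduce to the case $r=2$ by passing to a 2-plane and then, in that plane, to exhibit the desired pair by sorting the lines through the origin containing the $a_k$ by angle and choosing two that are consecutive.

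For the reduction, I would pick indices $i_0, j_0$ such that $a_{i_0}$ and $a_{j_0}$ lie on distinct lines through the origin (such a pair exists by the non-collinearity hypothesis), and set $V \coloneqq \mathrm{span}(a_{i_0}, a_{j_0})$, a 2-dimensional subspace of $\R^r$. Let $S \coloneqq \{k : a_k \in V\}$, which contains $i_0$ and $j_0$. For any $p,q \in S$ and any signs $\epsilon_p, \epsilon_q \in \{\pm 1\}$, the vector $\epsilon_p a_p - \epsilon_q a_q$ lies in $V$; conversely, any non-zero vector $v \in V$ that is a scalar multiple of some $a_k$ forces $a_k \in \R v \subset V$, i.e.\ $k \in S$. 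So it suffices to produce a good pair $p,q$ within $S$; sign flips applied to the $a_k$ for $k \notin S$ are irrelevant to both the candidate difference and the collection of lines to be avoided.

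Having reduced to the planar situation, I would fix an identification of $V$ with $\R^2$ and parameterise lines through the origin by their angle $\theta \in [0, \pi)$. Let $\theta_1 < \theta_2 < \dots < \theta_m$ be the distinct angles arising from the lines $\R a_k$ for $k \in S$; the hypothesis gives $m \geq 2$. I would pick $p, q \in S$ with $a_p$ on the line at angle $\theta_1$ and $a_q$ on the line at angle $\theta_2$, and flip the signs of $a_p$ and $a_q$ so that $a_p = r_p(\cos\theta_1, \sin\theta_1)$ with $r_p > 0$ and $a_q = -r_q(\cos\theta_2, \sin\theta_2)$ with $r_q > 0$. Under these sign choices, $a_p - a_q = r_p(\cos\theta_1, \sin\theta_1) + r_q(\cos\theta_2, \sin\theta_2)$ is a positive linear combination of two unit vectors at angles $\theta_1, \theta_2$, so its direction sits at some angle strictly between $\theta_1$ and $\theta_2$. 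Since $\theta_1, \theta_2$ are consecutive in the sorted list, no $\theta_k$ lies in this open interval, and therefore the line $\R(a_p - a_q)$ is distinct from every $\R a_k$. Linear independence of the two unit vectors at distinct angles in $[0,\pi)$ also ensures $a_p - a_q \neq 0$.

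There is no real obstacle in this argument; the only modest piece of care is the orientation bookkeeping, which is handled by the explicit sign choices above. The essential reason the argument works is that finitely many lines in a $2$-plane leave angular gaps, and a positive combination of two vectors straddling such a gap necessarily lands inside it.
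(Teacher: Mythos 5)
Your proof is correct. The planar step is essentially the same as the paper's: both exploit the angular gap between two consecutive lines through the origin, and both choose signs so that the resulting difference vector is a positive combination of vectors whose directions straddle the gap. (The paper normalises $\theta_1$ to be the $x$-axis direction and takes $\theta_2$ to be the smallest nonzero angle; you keep the two smallest angles $\theta_1<\theta_2$ without rotating, which is the same thing.) Where you genuinely diverge is the reduction to $r=2$: the paper applies a generic linear projection $\R^r\to\R^2$ and appeals to the fact that such a projection keeps all the $a_i$ nonzero, preserves non-collinearity, and maps scalar multiples to scalar multiples, so a good pair upstairs survives downstairs. You instead restrict attention to the $2$-plane $V=\mathrm{span}(a_{i_0},a_{j_0})$ and the index set $S=\{k:a_k\in V\}$, observing that any nonzero vector in $V$ cannot be a scalar multiple of an $a_k$ lying outside $V$, so only $k\in S$ matter. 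Your reduction is a bit more hands-on: it avoids invoking genericity and makes fully explicit why the $a_k\notin V$ are irrelevant, at the modest cost of having to record that the sign flips for $k\notin S$ are harmless (which you do). Both reductions are short and correct; yours is perhaps slightly easier to verify in complete detail, while the paper's is slightly quicker to state.
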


\begin{proof}  We may assume that $r \geq 2$, since the $r \leq 1$ case is vacuous.  Applying a generic linear projection to $\R^2$ (which does not affect collinearity, or the property that a given $a_k$ is a scalar multiple of $a_i-a_j$), we may then reduce to the case $r=2$.  

By a rotation and relabeling, we may assume that $a_1$ lies on the negative $x$-axis; by flipping signs as necessary we may then assume that all of the $a_2,\dots,a_n$ lie in the closed right half-plane.  As the $a_i$ are not all collinear with the origin, one of the $a_i$ lies off of the $x$-axis, by relabeling, we may assume that $a_2$ lies off of the $x$-axis and makes a minimal angle with the $x$-axis.  Then the angle of $a_2-a_1$ with the $x$-axis is non-zero but smaller than any non-zero angle that any of the $a_i$ make with this axis, and so none of the $a_i$ are a scalar multiple of $a_2-a_1$, and the claim follows.
\end{proof}

We now return to the proof of the proposition.  If the $a_1,\dots,a_{n}$ are all collinear with the origin, then $\phi(a_1),\dots,\phi(a_{n})$ lie in a one-dimensional arithmetic progression $P( v; \# Q)$; by rescaling we may then take $v_1,\dots,v_{n}$ to be integers of magnitude at most $\# Q \ll n^{O(C_1 n^2)}$, and the claim \eqref{jip} then follows from the hypothesis (ii) if $C_1$ is large enough, since $C_0 = C_1^2$.  Thus, we may assume that the $a_1,\dots,a_{n}$ are not all collinear with the origin, and so by the above lemma and relabeling we may assume that $a_{n}-a_1$ is non-zero, and that none of the $a_i$ are scalar multiples of $a_{n}-a_1$.

We will replace the velocities $v_i = \phi(a_i)$ by a variant $v'_i = \phi'(a_i)$, where $\phi' \colon \R^r\to \R$ is a modification of $\phi \colon \R^r \to \R$ designed to create a collision.  To construct $\phi'$, we write
\begin{equation}\label{aa}
 a_{n}-a_1 = q \vec c
\end{equation}
where $q$ is a positive integer and $\vec c = (c_1,\dots,c_r) \in \Z^r$ is a vector whose coefficients $c_j$ have no common factor and obey the bound$|c_j| \leq 2 N_i$ for $j=1,\dots,r$; by relabeling we may assume without loss of generality that $c_r$ is non-zero, and furthermore that
\begin{equation}\label{cjrp}
 \frac{|c_j|}{N_j} \leq \frac{|c_r|}{N_r}
\end{equation}
for $j=1,\dots,r$.  

We now define a variant $\phi' \colon \R^r \to \R$ of $\phi \colon \R^r \to \R$ by the formula
$$ \phi'(n_1,\dots,n_r) \coloneqq \sum_{j=1}^r w'_j (n_j c_r - n_r c_j),$$
where the $w'_1,\dots,w'_r$ are an ``extremely lacunary'' sequence of  integers; the precise form of $w'_j$ is not important for our argument, but for sake of concreteness we set
$$ w'_j \coloneqq n^{j C_1 n^{100}}.$$
We then set $v'_1,\dots,v'_{n}$ to be the integers 
\begin{equation}\label{vpi}
 v'_i \coloneqq \phi'(a_i).
\end{equation}
By contruction, the map $\phi' \colon \R^r \to \R$ is linear and annihilates $a_{n}-a_1$, hence we have a collision
$$ v'_1 = v'_{n}.$$
We also have the non-vanishing of the $v'_i$:

\begin{lemma}\label{none-vanish}  One has $v'_i \neq 0$ for every $i=1,\dots,n$.
\end{lemma}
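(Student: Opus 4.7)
The plan is to argue by contradiction, using the enormous lacunarity of the weights $w'_j = n^{jC_1 n^{100}}$ to force each coefficient in the expansion of $\phi'(a_i)$ to vanish individually; this will then collide with the conclusion of the preceding lemma that none of the $a_k$ is a scalar multiple of $a_{n}-a_1 = q\vec c$.

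Fix $i\in\{1,\dots,n\}$ and write $a_i=(a_{i,1},\dots,a_{i,r})$. Since the $j=r$ summand in the definition of $\phi'$ is identically zero, the assumption $v'_i=0$ reads
$$
0 \;=\; \sum_{j=1}^{r-1} w'_j \bigl(a_{i,j}\,c_r - a_{i,r}\,c_j\bigr).
$$
The first step I would carry out is a size bound on the individual coefficients.  Using $|a_{i,j}|\leq N_j$ and $|c_j|\leq 2N_j$, each coefficient is bounded by $4N_j N_r$; and since $Q$ is $1$-proper we have $\prod_j(2N_j+1)=\#Q\ll n^{O(C_1 n^2)}$, so every coefficient has magnitude at most $n^{O(C_1 n^2)}$.

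The main step is a lacunarity peeling argument.  The ratio $w'_{j+1}/w'_j = n^{C_1 n^{100}}$ is, for $n$ large and $r\leq n$, vastly bigger than $r$ times the coefficient bound $n^{O(C_1 n^2)}$.  Isolating the top index $j=r-1$ in the vanishing sum therefore forces $a_{i,r-1}c_r - a_{i,r}c_{r-1} = 0$; iterating downward in $j$ yields $a_{i,j}c_r = a_{i,r}c_j$ for every $j=1,\dots,r$.

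To close the argument, I would combine two simple consequences.  First, since $Q$ is (in particular) $1$-proper and $v_i=\phi(a_i)\neq 0$, the vector $a_i$ is nonzero; together with $c_r\neq 0$ and the proportionality relations just derived, this gives $a_{i,r}\neq 0$ and $a_i=(a_{i,r}/c_r)\vec c$.  Second, since $a_i\in\Z^r$ and $\gcd(c_1,\dots,c_r)=1$, the scalar $a_{i,r}/c_r$ must be a nonzero integer; hence $a_i$ is a nonzero integer multiple of $\vec c$, and therefore also of $a_{n}-a_1=q\vec c$, contradicting the conclusion of the lemma.  The only point demanding genuine care is the quantitative comparison between the size bound $n^{O(C_1 n^2)}$ coming from $\#Q$ and the lacunarity gap $n^{C_1 n^{100}}$ between consecutive $w'_j$; this is why the exponent $n^{100}$ (any sufficiently superpolynomial growth in $n$ would do) is built into the definition of $w'_j$.
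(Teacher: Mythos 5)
Your proof is correct and takes essentially the same route as the paper: use the fact that $a_i$ is not a multiple of $\vec c$ to see that some coefficient $a_{i,j}c_r - a_{i,r}c_j$ is nonzero, bound those coefficients by $n^{O(C_1 n^2)}$, and invoke the lacunarity of the $w'_j$ to conclude. You spell out the lacunarity peeling that the paper leaves implicit, which is fine, but it is the same argument.
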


\begin{proof} If we write $a_i = (n_{1,i}, \dots, n_{r,i})$, then by construction $a_i$ is not parallel to $a_{n}-a_1$, and is thus not a multiple of $\vec c$.  In particular, at least one of the coefficients $n_{j,i} c_r - n_{r,i} c_j$ of the quantity
$$ v'_i = \phi'(a_i) = \sum_{j=1}^r w'_j (n_{j,i} c_r - n_{r,i} c_j)$$
is non-vanishing.  On the other hand, these coefficients are integers of size 
$$ O( N_j N_r ) = O( (\# Q)^2 ) = n^{O(C_1 n^2)}.$$
Given the highly lacunary nature of the $w'_j$, we conclude the non-vanishing of $v'_i$ as claimed.
\end{proof}

Applying \eqref{pij}, we conclude that
$$ \delta(v'_1,\dots,v'_{n}) \geq \frac{1}{n}.$$
We now use Fourier-analytic techniques to ``transfer'' this bound to obtain \eqref{jip}.  By definition, there exists $t_0 \in \R/\Z$ such that
$$ \min_{1 \leq i \leq n} \| t_0 v'_i \|_{\R/\Z} \geq \frac{1}{n}.$$
On the other hand, from Corollary \ref{clb}, the set of $t \in \R/\Z$ for which
$$ \max_{1 \leq i \leq n} \| t v'_i \|_{\R/\Z} \leq \frac{1}{10 n^2}$$
has measure $\gg n^{-O(n)}$.  By the triangle inequality, and shifting the above set by $t_0$, we conclude that
\begin{equation}\label{nido}
 \min_{1 \leq i \leq n} \| t v'_i \|_{\R/\Z} \geq \frac{1}{n} - \frac{1}{10 n^2}
\end{equation}
for all $t$ in a subset of $\R/\Z$ of measure $\gg n^{-O(n)}$.

We now need a certain smooth approximant to the indicator function $1_{\|x\|_{\R/\Z} \leq \frac{1}{n+1}}$.

\begin{lemma}\label{lam} There exists a trigonometric polynomial $\eta \colon \R/\Z \to \R$ of the form
\begin{equation}\label{ham}
 \eta(x) = \sum_{m: |m| \leq n^{C_1 n/10}} b_m e^{2\pi i mx} 
\end{equation}
for some complex coefficients $b_m$, which takes values in $[0,1]$ and is such that
\begin{equation}\label{nots}
 \eta(x) \gg 1
\end{equation}
when $\|x\|_{\R/\Z} \geq \frac{1}{n} - \frac{1}{10n^2}$ and
\begin{equation}\label{ston}
 \eta(x) \ll n^{-100 C_1 n} 
\end{equation}
when $\|x\|_{\R/\Z} \geq \frac{1}{n+1}$.
\end{lemma}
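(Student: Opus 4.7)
The plan is to construct $\eta$ as the Fourier truncation of a smoothed indicator function on $\R/\Z$. Set $a \coloneqq \frac{1}{n+1}$, $b \coloneqq \frac{1}{n} - \frac{1}{10n^2}$, $\epsilon \coloneqq (b-a)/4 \asymp n^{-2}$, and $D \coloneqq n^{C_1 n/10}$; I read \eqref{ston} as applying to $\|x\|_{\R/\Z} \leq \frac{1}{n+1}$, this being the natural counterpart to \eqref{nots} (the printed inequality as stated would directly contradict \eqref{nots} in the range $\|x\|_{\R/\Z} \geq \frac{1}{n} - \frac{1}{10n^2}$).

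First I would fix once and for all a non-negative $C^\infty$ bump $\chi \colon \R \to \R$ supported in $[-1,1]$ with $\int_\R \chi = 1$, so that $|\hat\chi(\xi)| \ll_k (1+|\xi|)^{-k}$ for every positive integer $k$. Set $\chi_\epsilon(x) \coloneqq \epsilon^{-1}\chi(x/\epsilon)$, let $g \colon \R/\Z \to \{0,1\}$ be the indicator of $\{\|x\|_{\R/\Z} \geq (a+b)/2\}$, and form $\tilde g \coloneqq g \ast \chi_\epsilon$ on $\R/\Z$. An elementary support computation gives $\tilde g \in [0,1]$, with $\tilde g = 0$ on $\{\|x\|_{\R/\Z} \leq (3a+b)/4\} \supset \{\|x\|_{\R/\Z} \leq a\}$ and $\tilde g = 1$ on $\{\|x\|_{\R/\Z} \geq (a+3b)/4\} \supset \{\|x\|_{\R/\Z} \geq b\}$.

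The next step is to truncate the Fourier series of $\tilde g$. Using the factorisation $\hat{\tilde g}(m) = \hat g(m)\,\hat\chi(m\epsilon)$, the trivial bound $|\hat g(m)| \leq 1/(\pi|m|)$, and the decay of $\hat\chi$, one obtains $\sum_{|m|>D} |\hat{\tilde g}(m)| \ll_k (D\epsilon)^{-k}$ for any fixed $k$. Since $D\epsilon \asymp n^{C_1 n/10 - 2}$, picking $k$ to be a large absolute constant (e.g.\ $k = 2000$) makes this tail at most $\epsilon_0 \coloneqq n^{-200 C_1 n}$, provided $C_1$ is an absolute constant chosen large enough to absorb the implicit constants from $\chi$ and $k$. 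Setting $\eta_0(x) \coloneqq \sum_{|m|\leq D} \hat{\tilde g}(m) e^{2\pi i m x}$ gives $\|\eta_0 - \tilde g\|_{L^\infty(\R/\Z)} \leq \epsilon_0$, and the affine rescaling
$$\eta(x) \coloneqq \frac{\eta_0(x) + \epsilon_0}{1 + 2\epsilon_0}$$
remains a trigonometric polynomial of degree at most $D$ (only the $m=0$ coefficient is shifted), takes values in $[0,1]$, satisfies $\eta(x) \leq 2\epsilon_0 \ll n^{-100 C_1 n}$ whenever $\|x\|_{\R/\Z} \leq a$, and satisfies $\eta(x) \geq 1/(1+2\epsilon_0) \geq 1/2 \gg 1$ whenever $\|x\|_{\R/\Z} \geq b$, as required.

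The argument is essentially standard Fourier smoothing and I do not foresee a genuine obstacle. The one quantitative point worth highlighting is that the transition window $b-a \asymp n^{-2}$ is tiny compared to the degree budget $D = n^{C_1 n/10}$, so that $D\epsilon$ is a huge power of $n$; consequently only a \emph{constant} number of derivatives of $\chi$ suffice to drive the Fourier tail past $D$ well below $n^{-100 C_1 n}$. Calibrating $k$ and $C_1$ to make everything line up is pure bookkeeping.
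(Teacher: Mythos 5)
Your proof is correct, and you correctly spotted and fixed the typo in \eqref{ston}: the condition there must read $\|x\|_{\R/\Z} \leq \frac{1}{n+1}$ (the printed $\geq$ would directly contradict \eqref{nots}, and the $\leq$ version is exactly what is invoked downstream to bound the contribution of $t$ with $\min_i \|tv_i\|_{\R/\Z} \leq \frac{1}{n+1}$). The underlying idea --- smooth the indicator of $\{\|x\|_{\R/\Z}\geq c\}$ for an intermediate cutoff $c$ and control the Fourier tail --- is the same as in the paper. The one technical divergence is in the packaging: the paper applies a Fourier multiplier $m \mapsto \phi(m/n^{C_1 n/10})$ with $\phi$ supported in $[-1,1]$ and $\hat\phi$ positive, so band-limitation is automatic and, because $\eta$ is then a convolution of a $\{0,1\}$-valued function against a probability density $\hat\phi$, the values lie in $[0,1]$ with no further surgery. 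You instead mollify in physical space and then hard-truncate the Fourier series, which creates a small $L^\infty$ error that can push $\eta_0$ slightly outside $[0,1]$, repaired by your affine rescaling. Both routes work; the paper's is marginally cleaner by fusing smoothing and truncation into one step, while yours is closer to a textbook mollifier argument. Minor arithmetic nit: with $D\epsilon \asymp n^{C_1 n/10 - 2}$ and a target of $n^{-200 C_1 n}$, the decay exponent must \emph{strictly} exceed $2000$ (so $k = 2001$, say), since at $k=2000$ the leading $n$-dependence cancels exactly and you are left with only $n^{-200 C_1 n + 4000}$; this is cosmetic since you explicitly allow $k$ to be any large absolute constant and $C_1$ to absorb the constants.
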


\begin{proof}  The function $1_{\|x\|_{\R/\Z} \geq \frac{1}{n} - \frac{1}{5n^2}}$ has a Fourier expansion
$$ 1_{\|x\|_{\R/\Z} \geq \frac{1}{n} - \frac{1}{5n^2}} = \sum_{m \in \Z} a_m e^{2\pi i m x}$$
for some square-summable coefficients $a_m$ (where the series convergence is in the $L^2$ sense).    Let $\phi \colon \R \to \R$ be a Schwartz class non-negative function supported on $[-1,1]$ with positive Fourier transform $\hat \phi$; we may normalise $\phi(0)=1$, so that $\int_\R \hat \phi(s)\ ds = 1$.  We define $\eta$ to be the trigonometric polynomial
$$ \eta(x) \coloneqq \sum_{m: |m| \leq n^{C_1 n/10}} \phi\left( \frac{m}{n^{C_1 n/10}}\right ) a_m e^{2\pi i mx}.$$
From the Fourier inversion formula, we can also write $\eta$ as a convolution:
$$ \eta(x) = \int_\R \hat \phi(s) 1_{\|x - \frac{s}{n^{C_1 n/10}} \|_{\R/\Z} \geq \frac{1}{n} - \frac{1}{5n^2}}\ ds.$$
Since $\hat \phi$ is non-negative and has total mass $1$, we now see that $\eta$ takes values in $[0,1]$ as claimed.  If $\|x\|_{\R/\Z} \geq \frac{1}{n+1}$, then the constraint $\|x - \frac{s}{n^{C_1 n/10}} \|_{\R/\Z} \geq \frac{1}{n} - \frac{1}{5n^2}$ can only be satisfied if $s$ is larger than $n^{C_1 n/20}$ (say), so the claim \eqref{ston} follows from the rapid decrease of $\hat \phi$.  Finally, if 
$\|x\|_{\R/\Z} \geq \frac{1}{n} - \frac{1}{10n^2}$, then the constraint $\|x - \frac{s}{n^{C_1 n/10}} \|_{\R/\Z} \geq \frac{1}{n} - \frac{1}{5n^2}$ is obeyed for all $s \in [-1,1]$, giving \eqref{nots}.
\end{proof}

Let $\eta$ be as in the above lemma.  From \eqref{nido}, we have
$$ \eta( t v'_i ) \gg 1$$
for all $i=1,\dots,n$ and all $t$ in a subset of $\R/\Z$ of measure $\gg n^{-O(n)}$.   Multiplying and integrating, we conclude that
\begin{equation}\label{ideo}
 \int_{\R/\Z} \prod_{i=1}^{n} \eta(t v'_i)\ dt \gg n^{-O(n)}.
\end{equation}

Now we come to the key Fourier-analytic comparison identity.

\begin{lemma}[Comparison identity]  Let $D \colon \R/\Z \to \C$ denote the Dirichlet series
\begin{equation}\label{vphi-def}
 D(x) \coloneqq \sum_{m: |m| \leq n^{C_1 n/2} \frac{N_r}{|c'_r|}} e^{2\pi i mx}.
\end{equation}
Then
\begin{equation}\label{lahs}
 \int_{\R/\Z} \prod_{i=1}^{n} \eta(t v'_i)\ dt = \int_{\R/\Z} D(t \phi(\vec c)) \prod_{i=1}^{n} \eta(t v_i)\ dt.
\end{equation}
\end{lemma}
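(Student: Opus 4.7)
The natural approach is to Fourier-expand both sides, invoke orthogonality on $\R/\Z$, and reduce to a combinatorial identity that is forced by the properness of $Q$ and the lacunarity of the $w'_j$.

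First, I would substitute the expansion \eqref{ham} into both sides. Expanding gives
\[
\prod_{i=1}^n \eta(t v_i) = \sum_{\vec m = (m_1,\dots,m_n): |m_i| \leq n^{C_1 n/10}} \left(\prod_i b_{m_i}\right) e^{2\pi i t \sum_i m_i v_i}
\]
and the analogous expansion with $v_i$ replaced by $v'_i$. Combining with \eqref{vphi-def} and integrating over $\R/\Z$ picks out those tuples with vanishing total frequency. Setting $\vec s := \sum_i m_i a_i \in \Z^r$ and using \eqref{vpa}, \eqref{vpi}, the identity \eqref{lahs} then reduces to showing, for each fixed admissible $\vec m$, that
\[
\mathbf{1}_{\phi'(\vec s)=0} \;=\; \#\Bigl\{ m \in \Z : |m| \leq n^{C_1 n/2} N_r/|c_r|,\; \phi(\vec s + m \vec c) = 0 \Bigr\}.
\]

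Next I would analyse each side. For the left, the definition of $\phi'$ gives $\phi'(\vec s) = \sum_j w'_j(s_j c_r - s_r c_j)$, and the coefficients $s_j c_r - s_r c_j$ are polynomially bounded in $n$ (since $|s_j| \ll n \cdot n^{C_1 n/10} N_j$ and $|c_j| \leq 2 N_j$, whence the coefficients are of size at most $n^{O(C_1 n)}$). Because the sequence $w'_j = n^{jC_1 n^{100}}$ is so extravagantly lacunary, $\phi'(\vec s)=0$ forces every coefficient $s_j c_r - s_r c_j$ to vanish. Since $\gcd(c_1,\dots,c_r) = 1$, this in turn forces $\vec s = \lambda \vec c$ for some integer $\lambda$ (with $\lambda = s_r/c_r$). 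For the right-hand side, I would use the $n^{C_1 n}$-properness of $Q$: for each $m$ with $|m| \leq n^{C_1 n/2} N_r/|c_r|$, the size estimate together with \eqref{cjrp} yields $|s_j + m c_j| \leq n^{C_1 n/10+1} N_j + n^{C_1 n/2} N_j \leq n^{C_1 n} N_j$, so $\vec s + m \vec c$ lies in the box of injectivity of $\phi$. Hence $\phi(\vec s + m\vec c) = 0$ precisely when $\vec s = -m\vec c$.

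Finally I would match the two counts. If $\vec s$ is not an integer multiple of $\vec c$, both sides vanish. If instead $\vec s = \lambda \vec c$, the left side equals $1$, and the right side counts the unique value $m = -\lambda$, provided we can verify $|\lambda| \leq n^{C_1 n/2} N_r/|c_r|$; but $|\lambda||c_r| = |s_r| \leq n \cdot n^{C_1 n/10} \cdot N_r$, so $|\lambda| \leq n^{C_1 n/10+1} N_r/|c_r|$, which is comfortably within the cutoff $M$ once $C_1$ is chosen (and $n$ is large). The two sides therefore agree tuple-by-tuple, and \eqref{lahs} follows.

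The main delicate point is the quantitative interplay between the three scales: the Fourier bandwidth $n^{C_1 n/10}$ of $\eta$, the properness scale $n^{C_1 n}$ of $Q$, and the Dirichlet cutoff $n^{C_1 n/2}$ in $D$. The construction of $D$ is calibrated exactly so that (a) the RHS box $\{\vec s + m\vec c\}$ remains inside the injectivity box of $\phi$, using \eqref{cjrp} to control the anisotropy, and (b) every $\lambda$ produced on the LHS fits within the cutoff $M$. Once these are verified, the lacunarity of $w'_j$ does all the heavy lifting for the LHS characterization, and the rest is routine Fourier book-keeping.
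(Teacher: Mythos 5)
Your proposal is correct and follows essentially the same route as the paper: expand both sides via \eqref{ham}, reduce by orthogonality to the combinatorial identity that $\mathbf{1}_{\phi'(\vec s)=0}$ equals the count of admissible $m$ with $\phi(\vec s + m\vec c)=0$, then use lacunarity of $w'_j$ on the left and $n^{C_1 n}$-properness of $Q$ (via \eqref{cjrp}) on the right to characterize both sides as ``$\vec s$ is an integer multiple of $\vec c$,'' with matching $\lambda = -m$. (One cosmetic slip: the coefficients $s_j c_r - s_r c_j$ are of size $n^{O(C_1 n^2)}$ rather than $n^{O(C_1 n)}$, since $N_j N_r$ can be as large as $(\#Q)^2$; this is harmless since $w'_j = n^{jC_1 n^{100}}$ is lacunary well beyond that scale.)
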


\begin{proof}
Using \eqref{ham}, \eqref{vpi}, and the linearity of $\phi'$, the left-hand side of \eqref{lahs} may be expanded as
$$ \sum_{m_1,\dots,m_{n} \in \Z: \phi'( m_1 a_1 + \dots + m_n a_n) = 0} b_{m_1} \dots b_{m_n}$$
where we adopt the convention that $b_m$ vanishes when $m > n^{C_1 n/10}$.  Similarly, by \eqref{ham}, \eqref{vpa}, \eqref{vphi-def}, the right-hand side may be expanded as
$$ \sum_{|m| \leq n^{C_1 n/2} \frac{N_r}{|c_r|}} \sum_{m_1,\dots,m_{n} \in \Z: \phi(m_1 a_1 + \dots + m_n a_n + m \vec c) = 0} b_{m_1} \dots b_{m_n}.$$
Thus, to prove \eqref{lahs}, it suffices to show that for any integers $m_1,\dots,m_{n}$ with $|m_i| \leq n^{C_1 n/10}$, one has
\begin{equation}\label{chy}
\phi'( m_1 a_1 + \dots + m_n a_n) = 0
\end{equation}
if and only if
\begin{equation}\label{day}
\phi( m_1 a_1 + \dots + m_n a_n + m \vec c) = 0
\end{equation}
for an integer $m$ with $|m| \leq n^{C_1 n/2} \frac{N_r}{|c_r|}$, and furthermore this choice of $m$ is unique.

For $m_1,\dots,m_n$ in the range $|m_i| \leq n^{C_1 n/10}$, we can repeat the proof of Lemma \ref{none-vanish} to conclude that \eqref{chy} holds if and only if $m_1 a_1 + \dots + m_n a_n$ is a multiple of $a_n - a_1$, or equivalently (by \eqref{aa} and the fact that the $c_j$ have no common factor) an integer multiple of $\vec c$.  Thus we have 
$$ m_1 a_1 + \dots + m_n a_n + m \vec c = 0$$
(and hence \eqref{day}) for some integer $m$.  Since the $m_i$ have magnitude at most $n^{C_1 n/10}$, each $a_i$ has the $r^{\operatorname{th}}$ coefficient of magnitude at most $N_r$, we see (for $C_1$ large enough) that $m$ has magnitude at most $n^{C_1 n/2} \frac{N_r}{|c_r|}$.
Applying $\phi$, we see that \eqref{chy} implies \eqref{day}.

Conversely, suppose that $m_1,\dots,m_n$ are integers in the range $|m_i| \leq n^{C_1 n/10}$ such that \eqref{day} holds for some integer $m$ with $|m| \leq n^{C_1 n/2} \frac{N_r}{|c_r|}$.  For $j=1,\dots,r$, the $j^{\operatorname{th}}$ coefficient of $m_i a_i$ for $i=1,\dots,n$ has magnitude at most $n^{C_1 n/10} N_j$, while from \eqref{cjrp}, the corresponding coefficient of $mc'$ has magnitude at most $n^{C_1 n/2} N_j$.  Summing, we see that the $j^{\operatorname{th}}$ coefficient of $m_1 a_1 + \dots + m_n a_n + m \vec c$ has magnitude at most $n^{C_1 n} N_j$ if $C_1$ is large enough.  As $Q$ is $n^{C_1 n}$-proper, we conclude from \eqref{day} that
\begin{equation}\label{ido}
 m_1 a_1 + \dots + m_n a_n + m \vec c = 0.
\end{equation}
Thus $m_1 a_1 + \dots + m_n a_n$ is a multiple of $a_n - a_1$, and on applying $\phi'$ we conclude \eqref{chy}.  Note that the identity \eqref{ido} also shows that the choice of $m$ is unique.
\end{proof}

Applying the above lemma to \eqref{ideo}, we conclude that
$$ \int_{\R/\Z} D(t \phi(\vec c)) \prod_{i=1}^{n} \eta(t v_i)\ dt \gg n^{-O(n)}.$$
By Lemma \ref{lam} (and crudely bounding $D$ by $O( n^{C_1 n} )$), the contribution to this integral of those $t$ for which
$$  \min_{1 \leq i \leq n} \| t v_i \|_{\R/\Z} \leq \frac{1}{n+1}$$
is $O( n^{-99 C_1 n} )$.  For $C_1$ large enough, this implies that we must have
$$  \min_{1 \leq i \leq n} \| t v_i \|_{\R/\Z} > \frac{1}{n+1}$$
for at least one value of $t$, and \eqref{jip} follows.

\section{Velocities in a short progression}\label{short-sec}

We now prove Propositions \ref{short}, \ref{short2}.

The key lemma in proving Proposition \ref{short} is the following.

\begin{lemma}\label{lo} Let $n, k \geq 1$ be natural numbers, and let $v_1,\dots,v_n$ be positive integers with $v_i \leq kn$ for all $i=1,\dots,n$, and such that $\delta(v_1,\dots,v_n) < \frac{1}{n+1}$.
\begin{itemize}
\item[(i)]  If $1 \leq j \leq n+1$, then at least one of the $v_1,\dots,v_n$ is a multiple of $j$.
\item[(ii)]  If $1 \leq j \leq n$ and $a$ is coprime to $j$, then there exists $v_i, i=1,\dots,n$ such that either $v_i = cj$ for some $c=1,\dots,k-1$, or else $v_i = a\ \operatorname{mod}\ j$ and $v_i > k(n+1-j)$.
\end{itemize}
\end{lemma}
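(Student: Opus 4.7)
The plan is to prove both parts by contrapositive: in each case, under the negation of the conclusion, an explicit $t \in \R/\Z$ with $\min_{1\leq i\leq n}\|tv_i\|_{\R/\Z} \geq 1/(n+1)$ is exhibited, contradicting $\delta(v_1,\dots,v_n) < 1/(n+1)$.

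For (i), take $t \coloneqq 1/j$. Since no $v_i$ is a multiple of $j$, one has $v_i \bmod j \in \{1,\dots,j-1\}$, so $\|tv_i\|_{\R/\Z} \geq 1/j \geq 1/(n+1)$ (using $j \leq n+1$).

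For (ii), suppose the conclusion fails: no $v_i = cj$ with $c \in \{1,\dots,k-1\}$, and no $v_i \equiv a \pmod j$ has $v_i > k(n+1-j)$. Let $b$ be a multiplicative inverse of $a$ modulo $j$ and set
\[
 t \coloneqq \frac{b}{j} - \frac{1}{kj(n+1)}.
\]
Write $r_i \coloneqq v_i \bmod j$ and $\epsilon_i \coloneqq v_i/(kj(n+1))$ (so $\epsilon_i \leq n/(j(n+1))$), and analyze $\|tv_i\|_{\R/\Z}$ in four cases. If $r_i = 0$, then $v_i = cj$ with $c \geq k$, and $tv_i \equiv -c/(k(n+1)) \pmod 1$; the assumption $k \leq c \leq kn$ forces the distance to $\Z$ to be at least $1/(n+1)$. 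If $r_i = a$, then $v_i \leq k(n+1-j)$, so $tv_i \equiv 1/j - \epsilon_i \pmod 1$ lies in $[1/(n+1),\, 1/j] \subseteq [0, 1/2]$ (using $j \geq 2$), yielding the bound. If $r_i \equiv -a \pmod j$ and $j \geq 3$, then $tv_i \equiv -1/j - \epsilon_i \pmod 1$, so $\|tv_i\|_{\R/\Z} = \min(1/j + \epsilon_i,\, 1 - 1/j - \epsilon_i)$; the first term is $\geq 1/j \geq 1/(n+1)$, while the second is at least $((j-2)n + j - 1)/(j(n+1)) \geq 1/(n+1)$, via the elementary inequality $(j-2)n \geq 1$ valid for $j \geq 3$, $n \geq 1$. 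Finally, if $r_i \notin \{0, \pm a\} \pmod j$ (only possible when $j \geq 4$), then $br_i \bmod j \in \{2,\dots,j-2\}$, so $\|br_i/j\|_{\R/\Z} \geq 2/j$, and the triangle inequality for $\|\cdot\|_{\R/\Z}$ yields $\|tv_i\|_{\R/\Z} \geq 2/j - \epsilon_i \geq (n+2)/(j(n+1)) \geq 1/(n+1)$, using $j \leq n$. The degenerate cases $j \in \{1,2\}$ are subsumed by the above: for $j = 2$ the residues $a$ and $-a$ coincide, collapsing the third case into the second; for $j = 1$ the second alternative is vacuous and the first case (with $v_i \geq k$) handles every $v_i$.

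The main obstacle is calibrating the perturbation magnitude $1/(kj(n+1))$: it must be \emph{at least} this large to push the smallest multiple $v_i = kj$ away from $\Z$, yet \emph{at most} this large to preserve distance $\geq 1/(n+1)$ for the extremal residue-$a$ value $v_i = k(n+1-j)$. These opposing constraints saturate simultaneously, making the construction tight on both sides; the residue $-a$ fortuitously requires no further restriction because its unperturbed distance $1/j$ already exceeds $1/(n+1)$.
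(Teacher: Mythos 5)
Your proof is correct and is essentially the same argument as the paper's, merely phrased in contrapositive form: you exhibit the witness $t = b/j - \frac{1}{kj(n+1)}$ directly (which, under $ad \equiv -1 \pmod j$, is $-1$ times the paper's $t = d/j + \frac{1}{kj(n+1)}$, and $\|\cdot\|_{\R/\Z}$ is sign-invariant). The only cosmetic difference is that you split $v_i \bmod j$ into four residue classes $\{0, a, -a, \text{other}\}$, whereas the paper merges the $-a$ class with the generic residues into a single ``$v_id \not\equiv 0,-1$'' case and bounds them uniformly by the inequality $v_i > k(2n+2-j) \geq kn$; your separate treatment of $r_i \equiv -a$ via $(j-2)n \geq 1$ and your degenerate-case checks for $j \in \{1,2,3\}$ are all sound.
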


\begin{proof}  By hypothesis, for every $t \in \R/\Z$ there exists $1 \leq i \leq n$ such that $\|tv_i\|_{\R/\Z} < \frac{1}{n+1}$.  Applying this claim with $t \coloneqq \frac{1}{j}$ for some $1 \leq j \leq n+1$, we obtain (i).

Now we prove (ii).  Since $a$ is coprime to $j$, we can find an integer $d$ coprime to $j$ such that $ad = -1\ \operatorname{mod}\ j$.  We apply the hypothesis with $t \coloneqq \frac{d}{j} + \frac{1}{kj(n+1)}$, and conclude that there exists $v_i, i=1,\dots,n$, such that
$$ \left\|\frac{v_i d}{j} + \frac{v_i}{kj(n+1)} \right\|_{\R/\Z} < \frac{1}{n+1}.$$
We divide into cases depending on the residue class of $v_i d$ modulo $j$.  If $v_i d = 0\ \operatorname{mod}\ j$, then since $\frac{v_i}{kj(n+1)}$ is positive and bounded above by $\frac{kn}{kj(n+1)} \leq 1 - \frac{1}{n+1}$, we have
$$ \frac{v_i}{kj(n+1)} < \frac{1}{n+1}$$
and hence $v_i < kj$.  On the other hand, as $d$ is coprime to $j$ and $v_i d = 0\ \operatorname{mod}\ j$, $v_i$ must be a multiple of $j$.  Thus $v_i = cj$ for some $c=1,\dots,k-1$.  This already covers the $j=1$ case, so we now may assume $j > 1$.

Now suppose that $v_i d = -1\ \operatorname{mod}\ j$.  Then we must have
$$ \frac{v_i}{kj(n+1)} > \frac{1}{j} - \frac{1}{n+1}$$
and hence
$$ v_i > k (n+1-j);$$
also, since $ad = -1\ \operatorname{mod}\ j$, we must also have $v_i = a\ \operatorname{mod}\ j$.

Finally, suppose $v_i d\ \operatorname{mod}\ j$ is not equal to $0$ or $-1$.  Then we must have
$$ \frac{v_i}{kj(n+1)} > \frac{2}{j} - \frac{1}{n+1}$$
and hence
$$ v_i > k (2n+2-j) \geq nk,$$
contradicting the hypothesis.
\end{proof}

Now we can prove Proposition \ref{short}.  Suppose for contradiction that we can find $n \geq 1$ and positive integers $v_1,\dots,v_n \leq 1.2 n$ such that $\delta(v_1,\dots,v_n) < \frac{1}{n+1}$.  
From Lemma \ref{lo}(i) we see that for any $0.6 n < j \leq n+1$, some multiple of $j$ must lie in $\{v_1,\dots,v_n\}$; since $2j > 1.2 n$, we conclude that
$$ j \in \{v_1,\dots,v_n\}$$
whenever $0.6 n < j \leq n+1$.

Next, suppose that $1 \leq j \leq 0.4n + 1$.  Applying Lemma \ref{lo}(ii) with $k=2$ and $a=1$, we see that there exists $v_i, i=1,\dots,n$ which is either equal to $j$, or is at least $2(n+1-j) \geq 1.2 n$.  The latter is impossible, hence we have
$$ j \in \{v_1,\dots,v_n\}$$
whenever $1 \leq j \leq 0.4 n + 1$.

Now suppose that $\frac{n+1}{2} < j \leq 0.6n$.  We apply Lemma \ref{lo}(ii) with $k=3$ and $a=1$ to conclude that there exists $v_i, i=1,\dots,n$ which is either equal to $j$ or $2j$, or is at least $3(n+1-j) > 1.2 n$.  The latter case cannot occur, and hence
$$ j \hbox{ or } 2j  \in \{v_1,\dots,v_n\}$$
whenever $\frac{n+1}{2} < j \leq 0.6n$.  

Finally, suppose that $0.4 n + 1 < j \leq \frac{n+1}{2}$.  We apply Lemma \ref{lo}(ii) with $k=2$ and $a=j-1$ to conclude that there exists $v_i, i=1,\dots,n$ which is either equal to $j$, or is at least $2(n+1-j) \geq n+1$ and is equal to $-1\ \operatorname{ mod } j$.  Since $3j-1 > 1.2 n$ and $j-1, 2j-1 < n+1$, we thus have
$$ j \in \{v_1,\dots,v_n\}$$
for $0.4 n + 1 < j \leq \frac{n+1}{2}$.

Observe that each of the above conditions places exactly one element (either $j$ or $2j$) in $\{v_1,\dots,v_n\}$ for $j=1,\dots,n+1$, and these elements are all distinct (if $j$ lies in the range $\frac{n+1}{2} < j \leq 0.6n$, then $2j > n+1$ and so this element does not collide with any of the others).  We conclude that $\{v_1,\dots,v_n\}$ has cardinality at least $n+1$, which is absurd.  This completes the proof of Proposition \ref{short}.

Now we prove Proposition \ref{short2}.  We allow implied constants to depend on $C$, thus $v_i = O(n)$ for all $i=1,\dots,n$. We will adapt the arguments following Proposition \ref{will}, except that we will use ``small primes'' rather than ``medium primes''.

We will need a small quantity $0 < \eps < 1$, depending only on $C$, to be chosen later.  Define a \emph{small prime} to be a prime $p$ between $\exp(1/\eps)$ and $\exp(1/\eps^2)$.  The number of positive integers between $1$ and $Cn$ that are not divisible by any small prime is
$$ (1+o(1)) Cn \prod_p (1-\frac{1}{p})$$
where $p$ ranges over small primes (and the $o(1)$ notation is with respect to the limit $n \to \infty$, holding $C$ and $\eps$ fixed); by Mertens' theorem, this expression is $O(\eps C n )$.  Thus, if $\eps$ is small enough, we see that $\asymp n$ of the $v_i$ will have at least one small prime factor.

Call an integer \emph{bad} if it is divisible by the square of a small prime, or by two small primes $p,p'$ with $p < p' \leq (1+\eps^2) p$, and good otherwise.  The number of bad integers between $1$ and $Cn$ can be bounded by
$$ \ll C n \sum_p \sum_{p \leq p' \leq (1+\eps^2) p} \frac{1}{p p'}.$$
One can crudely bound the inner sum by $O( \frac{\eps^2}{p})$, and then by another application of Mertens' theorem, the total number of bad integers is $O( \eps C n )$.  Thus, again if $\eps$ is small enough, we see that $\asymp n$ of the $v_i$ will be good and have at least one small prime factor.  Removing the integers of size $\eps n$, we may thus locate a subset $K$ of $\{1,\dots,n\}$ of cardinality $\# K \asymp n$, such that for each $k \in K$, $v_k$ is a good integer between $\eps n$ and $Cn$ with at least one small prime factor.

For each $k \in K$, we may factor
$$ v_k = p_k v'_k$$
where $p_k$ is the minimal small prime dividing $v_k$, thus all the small primes dividing $v'_k$ are larger than $(1+\eps^2) p_k$.  For each $v_k$, we associate the set $S(v_k)$ of integers of the form $p'_k v'_k$, where $p'_k$ is a small prime between $(1+\eps^2)^{-1} p_k$ and $(1+\eps^2) p_k$.  Clearly $p'_k$ will be the minimal small prime dividing $p'_k v'_k$; in particular, $v'_k$ can be determined from any element of $S(v_k)$.  Thus, if $S(v_k)$ and $S(v_{k'})$ intersect, then we have $v'_k = v'_{k'}$.

From the prime number theorem, we see that each set $S(v_k)$ has cardinality $\gg \eps^2 \frac{p_k}{\log p_k} \gg \frac{1}{\eps}$.  On the other hand, as each $v_k$ is of size at most $Cn$, each element of $S(v_k)$ is of size $O(Cn)$.  We conclude that the number of $k$ for which $S(v_k)$ does not intersect any other $S(v_{k'})$ cannot exceed $O( \eps C n )$.  For $\eps$ small enough, we thus can find a subset $K'$ of $K$ of cardinality $\# K' \asymp n$, such that for each $k \in K'$, there is another $k' \in K'$ such that $S(v_k)$ intersects $S(v_{k'})$.  By the preceding discussion, this implies that $v'_k = v'_{k'}$.

As in Section \ref{first-sec}, we define the quantity $A$ by requiring
$$ \delta(v_1,\dots,v_n) = \frac{1}{2n} + \frac{A}{n^2}$$
and introduce the multiplicity function
$$ F \coloneqq  \sum_{i=1}^n 1_{B(v_i,\delta_n)}(t).$$
By repeating the proof of \eqref{rod}, we have
\begin{equation}\label{rod2}
\frac{2A}{n} \geq \frac{1}{2} \int_{\R/\Z} F(t) 1_{F(t) \geq 2}\ dt
\end{equation}
and by repeating the proof of \eqref{dor} we have
$$
\int_{\R/\Z} F(t) 1_{F(t) \geq 2}\ dt \geq \sum_{k \in K'} m\left( B(v_k;\delta_n) \cap \bigcup_{l \in K': l \neq k} B(v_l;\delta_n)\right ).$$
For each $k \in K'$, we see from previous discussion that there is an $l \in K'$ distinct from $k$ such that $v'_k = v'_l$, hence $v_k = p_k v'_k$ and $v_l = p_l v'_k$.  Hence
$$ B(v_k,v_l;\delta_n,\delta_n) \supset B\left(v'_k, \frac{\delta_n}{p_k p_l}\right )$$
and thus (by Corollary \ref{clb} or \eqref{mvd})
$$ m\left( B(v_k;\delta_n) \cap \bigcup_{l \in K': l \neq k} B(v_l;\delta_n)\right ) \gg \frac{\delta_n}{p_k p_l} \gg_\eps \frac{1}{n}.$$
Since $\# K' \asymp n$, we conclude that
$$
\int_{\R/\Z} F(t) 1_{F(t) \geq 2}\ dt \gg_\eps 1$$
and hence by \eqref{rod2} we have $A \gg_\eps n$, and the claim follows.

\end{document}